\tikzstyle{help lines}+=[dotted]
\newcommand{\C}{\mathbb{C}}
\newcommand{\R}{\mathbb{R}}
\newcommand{\D}{\mathcal{D}}
\newcommand{\N}{\mathbb{N}}
\newcommand{\norm}[1]{\left\Vert#1\right\Vert}
\let\Im=\Imag
\let\Re=\Real
\DeclareMathOperator{\supp}{supp}
\def\Ddots{\mathinner{\mkern1mu\raise\p@
\vbox{\kern7\p@\hbox{.}}\mkern2mu
\raise4\p@\hbox{.}\mkern2mu\raise7\p@\hbox{.}\mkern1mu}}
\newcommand{\eklm}[1]{\left\langle #1 \right\rangle}
\renewcommand{\d}{\mathrm{d}}
\renewcommand{\epsilon}{\vararepsilon}
\newcommand{\bdm}{\begin{displaymath}}
\newcommand{\edm}{\end{displaymath}}
\newcommand{\bq}{\begin{equation}}
\newcommand{\eq}{\end{equation}}
\newcommand{\bqn}{\begin{equation*}}
\newcommand{\eqn}{\end{equation*}}
\newcommand{\Cinft}{{C^{\infty}}}
\newcommand{\CT}{{C^{\infty}_c}}
\renewcommand{\L}{{\rm L}}
\newcommand{\g}{{\bf \mathfrak g}}
\newcommand{\aL}{{\bf \mathfrak a}}
\newcommand{\nL}{{\bf \mathfrak n}}
\renewcommand{\k}{{\bf \mathfrak k}}
\newcommand{\p}{{\bf \mathfrak p}}
\renewcommand{\Im}{\mathrm{Im}\,}
\renewcommand{\Re}{\mathrm{Re}\,}
\newcommand{\gam}{\Gamma\backslash}
\numberwithin{equation}{section}
\title[Quantum resonances and scattering poles]{Quantum resonances and scattering poles\\ of classical rank one locally symmetric spaces}
\theoremstyle{plain}
  \newtheorem{thm}{Theorem}[section]
  \newtheorem{lem}[thm]{Lemma}
  \newtheorem{prop}[thm]{Proposition}
  \theoremstyle{definition}
  \newtheorem{defn}[thm]{Definition}
  \theoremstyle{remark}
  \newtheorem{rem}[thm]{Remark}
\author[B.~Delarue]{Benjamin Delarue}
\email{bdelarue@math.upb.de}
 \author[J.~Hilgert]{Joachim Hilgert}
\email{hilgert@math.upb.de}
 \address{Universit\"at Paderborn, Warburger Str. 100, 33098 Paderborn, Germany}
\date{\today}
\begin{document}

\begin{abstract}
For negatively curved symmetric spaces it is known from \cite{HHP19} that the poles of the scattering matrices defined via the standard intertwining operators for the spherical principal representations of the isometry group are either given as poles of the intertwining operators or as quantum resonances, i.e. poles of the meromorphically continued resolvents of the Laplace-Beltrami operator. We extend this result to classical locally symmetric spaces of negative curvature with convex-cocompact fundamental group using results of Bunke and Olbrich. The method of proof forces us to exclude the spectral parameters corresponding to singular Poisson transforms. 
\end{abstract}

\maketitle

\section{Introduction}

Let $G$ be a non-compact connected classical real simple Lie group of real rank $1$.  We assume for simplicity that the center of $G$ is finite and fix a maximal compact subgroup $K$ of $G$. The quotient $X:=G/K$ is a non-compact Riemannian symmetric space of rank $1$. 

Suppose that $\Gamma\subset G$ is a discrete torsion-free subgroup. We denote the limit set of $\Gamma$ in the geodesic boundary $\partial X$ of $X$ by $\Lambda$ and set $\Omega:=\partial X\setminus \Lambda$. Further, we assume that $\Gamma$ is a convex-cocompact subgroup, by which we mean that the quotient $\gam(X\cup \Omega)$ is a compact manifold with boundary. $\Omega$ is then called the \emph{domain of discontinuity} of $\Gamma$ in $\partial X$ and $\Gamma\backslash \Omega$ is  called the \emph{boundary at infinity} of $\Gamma\backslash X$. We will always assume that $\Gamma$ is not cocompact, which means that the boundary of $\gam(X\cup \Omega)$ is non-empty.

Let $\Delta_X:\CT(X)\to \CT(X)$ denote the positive Laplacian of $X$ which is a $G$-invariant differential operator. Furthermore, there is the positive Laplacian $\Delta_ {\gam X}:\CT(\gam X)\to \CT(\gam X)$ of $\gam X$. Since $X$ and $\gam X$ are complete Riemannian manifolds, $\Delta_X$ and $\Delta_{\gam X}$ are essentially self-adjoint operators in $\L^2(X)$ and $\L^2(\gam X)$, respectively; we denote their unique self-adjoint extensions again by $\Delta_X$ and $\Delta_{\gam X}$.

The \emph{quantum resonances} are the poles of the meromorphically continued modified $\L^2$-resolvents
\begin{align}
\begin{split}
R_\zeta&:=(\Delta_X-\norm{\rho}^2-\zeta^2)^{-1}:\L^2(X)\to \L^2(X),\\
 R^\Gamma_\zeta&:=(\Delta_{\gam X}-\norm{\rho}^2-\zeta^2)^{-1}:\L^2(\gam X)\to \L^2(\gam X)\label{eq:RL2},\end{split}
\end{align}
which are holomorphic operator families on the \emph{physical half-plane} $\mathcal{H}_\mathrm{phys}:=\{\zeta\in \C\,|\, \Im \zeta >0 \}$ and on a possibly slightly smaller half-plane $\mathcal{H}_\mathrm{phys}^\Gamma\subset \mathcal{H}_\mathrm{phys}$, respectively (see Section~\ref{sec:resolventkernels} for details). Here $\rho$ denotes the usual half-sum of positive restricted roots and $\norm{\cdot}$ is the norm defined by the Killing form, see Section~\ref{sec:ressccc} for the setup of notation.

The \emph{scattering poles} mentioned in the title are the poles of the meromorphic families $S_\lambda$ and $S_\lambda^\Gamma$ of scattering matrices which in our situation can be constructed from the standard intertwining operators for the spherical principal series representations of $G$ using meromorphic families of extension operators constructed by Bunke and Olbrich in \cite{BO00}. For technical reasons they had to exclude the case $G=F_4^{-20}$. This is the only reason why also we restrict our attention to the classical rank one spaces.

The relation between quantum resonances and scattering poles for the global rank one symmetric spaces was described explicitly in \cite{HHP19}. The quantum resonances are\footnote{Note that the physical halfplane in \cite{HHP19} is given by $\Im \zeta<0$, whereas we take $\Im \zeta>0$, which means that we need to flip the sign of $\zeta$ when formulating certain statements from \cite{HHP19} in our setting.} the zeros of the function $c(i\zeta)c(-i\zeta)$ with 
$\Im \zeta<0$, where $c$ is a variant of the Harish-Chandra $c$-function. The zeros of $c$ are known (see e.g.\ \cite[Rem.~5.1]{HHP19}). The main result \cite[Thm.~7.1]{HHP19} of that paper says that the scattering poles outside of $\mathcal{H}_\mathrm{phys}$ are precisely the quantum resonances (with equal and finite multiplicities), whereas the scattering poles which are not quantum resonances are the poles of the family of standard intertwining operators. 

Earlier results are available (see \cite{GZ97,BP02,Gu05}) for the case of asymptotically real hyperbolic manifolds. While they take the scattering matrices from geometric scattering theory and use microlocal analysis of manifolds with corners, the basic means of comparison between resonances and scattering poles is, just as in the harmonic analysis approach of \cite{HHP19}, the possibility to take boundary values of resolvent kernels yielding modifications of the scattering kernels. Borthwick's book \cite{Bo16} contains a very readable account of these results for the case of asymptotically hyperbolic surfaces.

That the results of Bunke and Olbrich could be used to extend the relation between quantum resonances and scattering poles to convex-cocompact locally symmetric spaces of rank one was suggested in \cite[Problem~6.5]{Hi23} based on the fact that in \cite{BO12} Bunke and Olbrich studied the meromorphic extention of the resolvent kernels as well as the scattering poles for the classical locally symmetric spaces with negative curvature and convex-cocompact fundamental group.  

Our point of departure is \cite[Lemma 11]{BO12}, where they show that the difference between the resolvent kernels on $X$ and the lift of the resolvent kernels on  $\gam X$ has  boundary values on $\partial X\times \partial X$ which depend meromorphically on the spectral parameters. A careful analysis of the proof of this lemma yields a meromorphic function $a$ relating the boundary values of the lifted resolvent kernels of $\gam X$ and the lifted kernels of the scattering matrices away from the diagonal. Our technical Lemma~\ref{lem:poles-diagonal} (which may be of independent interest) shows that no extra singularities come from the diagonal and the results of \cite{HHP19} allow to determine the function $a$.  Thus we are lead to a generalization of the statement about the scattering poles being quantum resonances except for the exceptional spectral parameters for which the Poisson transform is singular so that no boundary value map is defined. More precisely, using the physical half-plane $\mathcal{H}^\Gamma_\mathrm{phys}$ depending on $\Gamma$,  our main result can be formulated as follows, where $\alpha_0$ denotes the unique reduced root in a chosen positive system (see Theorem~\ref{main theorem}).

\medskip
\noindent
{\bf Theorem.} {\it Outside the set $-\frac{i}{2}\norm{\alpha_0}\N_0$, the scattering poles which are not in the physical half-plane $\mathcal{H}^\Gamma_\mathrm{phys}$ are precisely the quantum resonances. Moreover, if $\zeta_0\not\in -\frac{i}{2}\norm{\alpha_0}\N_0$ is a simple pole of the quantum resolvent as well as the scattering matrix, then the boundary value map induces an isomorphism between the images of the two residue operators at $\zeta_0$.}
\medskip

In general, quantum resonances will not be simple poles of the resolvent, so our result on residues is incomplete. In the case of asymptotically hyperbolic manifolds complete results are available. While \cite{BP02} achieve this by adding potentials to split spectral lines, \cite{GZ97,Gu05} use Gohberg-Sigal normal forms (see \cite{GS71}). It seems plausible to us that these techniques could also be employed in our situation, but we have not made a serious effort to do so as it would have lengthened the paper in a disproportionate way. 

\subsection*{Acknowledgements} Funded by the Deutsche Forschungsgemeinschaft (DFG, German Research Foundation) – Project-ID 491392403 – TRR 358. BD has received further funding from the Deutsche Forschungsgemeinschaft through the Priority Program (SPP) 2026 ``Geometry at Infinity''.
\section{Setup}\label{sec:ressccc}

 Let $\g=\k\oplus \p$ be the Cartan decomposition of the Lie algebra of $G$, where $\k$ is the Lie algebra of $K$, and denote by $\eklm{\cdot,\cdot}$ the inner product on $\g$ defined by the Killing form and the Cartan involution $\theta$, and by $\norm{\cdot}$ the associated norm.  Let $\aL\subset \p$ be a maximal abelian subspace, $\Sigma\subset \aL^\ast$ the root system of $(\g,\aL)$, and fix a choice $\Sigma^+\subset \Sigma$ of positive roots, corresponding to a choice of positive Weyl chamber $\aL^+\subset \aL$. Denote by $\alpha_0\in \Sigma^+$ the unique reduced root. Since $\dim \aL=1$, there is precisely one element $H_0\in \aL^+$ with $\norm{H_0}=1$; it satisfies $\alpha_0(H_0)=\norm{\alpha_0}$.  
Let $\nL:=\bigoplus_{\alpha\in \Sigma^+}\g_\alpha$ be the nilpotent Lie algebra spanned by the root spaces $\g_\alpha$ and  $\rho:=\frac{1}{2}\sum_{\alpha\in \Sigma^+}m_\alpha \alpha\in \aL^\ast$ the half-sum of the positive roots, weighted according to their multiplicities $m_\alpha:=\dim \g_\alpha$.

Let $A, N\subset G$ be the analytic subgroups with Lie algebras $\aL$ and $\nL$, respectively, and let $M:=Z_K(A)$ be the centralizer of $A$ in $K$. Then the Iwasawa decomposition of $G$ is given by $G=KAN$ and  $P:=MAN$ is a minimal parabolic subgroup of $G$. We denote by $\exp:\g\to G$ the exponential map and by $\kappa:G\to K$, $H:G\to \aL$ the smooth maps such that $g\in\kappa(g)\exp(H(g))N$ for all $g\in G$. For $Y\in \aL$, $a=\exp(Y)\in A$, $\lambda\in \aL_\C^*:=\aL^\ast\otimes_\R\C$, we write $a^\lambda:=e^{\lambda(Y)}\in \C$. We fix the Haar measure $dk$ on $K$ of volume $1$.

\section{Resolvent kernels}\label{sec:resolventkernels}

It is known from \cite{HC76} and \cite{BO00} that the continuous spectra of $\Delta_{X}$ as well as $\Delta_{\gam X}$ are given by the interval $[\norm{\rho}^2,\infty)$, with $\Delta_{\gam X}$ having at most finitely many eigenvalues outside this interval. $\norm{\rho}^2$ is actually the minimum of the spectrum of  $\Delta_X$, but in 
order to describe the minimum of the spectrum of  $\Delta_{\gam X}$ we have to introduce the \emph{critical exponent}
\bq \begin{split}
\delta_\Gamma &:=\inf\Big\{s>0\,\Big|\, \sum_{\gamma \in \Gamma}e^{-sd_X(o,\gamma o)}<+\infty\Big\}\\\label{eq:criticalexponent}
&\phantom{:}=\inf\Big\{s>0\,\Big|\,\forall\; x,y\in X: \sum_{\gamma \in \Gamma}e^{-sd_X(x,\gamma y)}<+\infty\Big\},\end{split}
\eq
where  $d_X:X\times X\to \R$ is the Riemannian distance and $o:=K\in G/K=X$ is the canonical base point. The critical exponent lies in the interval $[0,2\norm{\rho}]$ (see \cite[Lemma~3.1]{We08}).

By \cite[Thm.~4.2]{Co90} (see also \cite[Thm.~1]{AZ22} as well as \cite{HC76} and \cite{BO00}) the minimum of the spectrum of  $\Delta_{\gam X}$ is $\norm{\rho}^2$ if $0\leq \delta_\Gamma\leq \norm{\rho}$, whereas in the case $\norm{\rho}< \delta_\Gamma\leq 2\norm{\rho}$ the latter minimum is given by $\norm{\rho}^2-(\delta_\Gamma-\norm{\rho})^2$. Thus, while the family $\zeta\mapsto  R_\zeta$ is holomorphic on the physical halfplane $\mathcal{H}_\mathrm{phys}$, the family $\zeta\mapsto   R_\zeta^\Gamma$ is holomorphic only on the halfplane 
\bq
\mathcal{H}^\Gamma_\mathrm{phys}:=
\begin{cases}
\{\zeta\in \mathbb C\mid \Im\zeta >0\}&\text{for } 0\le \delta_\Gamma\le \|\rho\|\\
\{\zeta\in \mathbb C\mid \Im\zeta >\delta_\Gamma -\|\rho\|\}&\text{for } \|\rho\|< \delta_\Gamma\le 2\|\rho\|.
\end{cases}\label{eq:physicalhalfplaneGamma}
\eq

\begin{figure}[htb]
\begin{tikzpicture}[scale =1]
\fill[green!5!white] (-1,0) rectangle (4,4);
	\node[anchor= north west] at (3,4) {\(\zeta\)};	
	\node[anchor= south east] at (0.5,0) {\(\textcolor{green}{\mathcal H_\mathrm{phys}}\)};	
	
\fill[blue!5!white] (-1,2) rectangle (4,4);
	\node[anchor= north west] at (3,4) {\(\zeta\)};	
	\node[anchor= south east] at (0.5,2) {\(\textcolor{blue}{\mathcal H_\mathrm{phys}^\Gamma}\)};	
	
	\coordinate (P) at (1,0);
	\coordinate (S1) at (1,0.4);
	\coordinate (S2) at (1,1);
	\coordinate (S3) at (1,1.5);
	\coordinate (R) at (1,2);
	\coordinate (P') at (8,0);
	\coordinate (R') at (7,0);
	\coordinate (S1') at (7.2,0);
	\coordinate (S2') at (7.6,0);
	\coordinate (S3') at (7.8,0);
	
	\coordinate (sP) at (11,2);
	\coordinate (sP+) at (11,0);
	\coordinate (sP-) at (4,-0.2);
	
	\coordinate (E1) at (1,-0.8);
	\coordinate (E2) at (1,-1.6);
	\coordinate (sO) at (6.1,0);

	\draw(1,-2) -- (1,4) node[shift={(0.2,-0.2)}]{\(\)};
	\draw[blue](-1,2) -- (4,2);
	\draw[green](-1,0) -- (4,0);
	\node at (P) {\(\textcolor{green}{\bullet}\)} node[shift={(1.2,-0.2)}]{\(0\)};
	\node at (R) {\(\textcolor{green}{\bullet}\)} node[shift={(2.05,1.75)}]{\(i(\delta_\Gamma -\|\rho\|)\)};
	\node at (S1) {\tiny\(\textcolor{green}{\bullet}\)};
	\node at (S2) {\tiny\(\textcolor{green}{\small\bullet}\)};
	\node at (S3) {\tiny\(\textcolor{green}{\small\bullet}\)};
	\node at (E1) {\tiny\(\textcolor{red}{\mathrm x}\)};
	\node at (E2) {\tiny\(\textcolor{red}{\mathrm x}\)};

\fill[blue!5!white] (6,-1) rectangle (11,4);
	\node[anchor= north west] at (8,4) {\(z=\|\rho\|^2+\zeta^2\)};	
	\node[anchor= south east] at (0.5,2) {\(\)};

	\begin{scope}[shift=(R'), rotate=-90]
	
	\draw[blue,fill=green!5!white](0,0) parabola (1,1) ;

	\draw[blue,fill=green!5!white](0,0) parabola (-2,4) ;
     \end{scope}

   \fill[green!5!white] (R') -- (sP) -- (sP+) -- (R');
   \fill[green!5!white] (R') -- (sP+) -- (11,-1) -- (8,-1) -- (R');
     
    \draw(8,0) -- (6,0);
    \draw[green,thick](8,0) -- (11,0);
     
    \node at (R') {\(\textcolor{green}{\bullet}\)}; 
    \node at (P') {\(\textcolor{green}{\bullet}\)} node[shift={(8.4,-0.3)}]{\(\|\rho\|^2\)};

    \node at (S1') {\tiny\(\textcolor{green}{\bullet}\)};
	\node at (S2') {\tiny\(\textcolor{green}{\small\bullet}\)};
	\node at (S3') {\tiny\(\textcolor{green}{\small\bullet}\)};


	
	
\end{tikzpicture}
	
	\end{figure}

Denote by $r_\zeta\in \D'(X\times X)$ and  $r^\Gamma_\zeta\in \D'((\gam X)\times (\gam X))$ the Schwartz kernels of $R_\zeta$ and $R^\Gamma_\zeta$, respectively, and by 
$
\tilde{r}^\Gamma_\zeta\in \D'(X\times X)^{\Gamma\times \Gamma}
$ 
the distribution corresponding to $r^\Gamma_\zeta$ when identifying $\D'((\gam X)\times (\gam X))=\D'(X\times X)^{\Gamma\times \Gamma}$. Note here that the identification uses the pullback via the canonical projection $\mathrm{pr}_\Gamma:X\to\Gamma\backslash X$, which is a submersion (cf. \cite[Thm.~6.12]{H90}).

When $\zeta$ lies sufficiently deep inside the physical half-plane, $\tilde r_\zeta^\Gamma$ is nothing but the ``$\Gamma$-summation'' (or ``$\Gamma$-average'') of $r_\zeta$. To make this precise we formulate the following lemma.
\begin{lem}[c.f.\ {\cite[Lemma 9]{BO12}}]\label{lem:rzetasummation}
For all $\zeta\in \mathcal{H}^\Gamma_\mathrm{phys}$ satisfying $\Im \zeta> \sqrt{(\Re \zeta)^2 +c_\Gamma}$, one has
\bq
\tilde r_\zeta^\Gamma=\sum_{\gamma \in \Gamma}(e,\gamma)^\ast r_\zeta,\label{eq:rzetasummation}
\eq
where
\bq
c_\Gamma:=\delta_\Gamma^2+\max(0,\delta_\Gamma-\norm{\rho})^2\in [\delta_\Gamma^2,2\delta_\Gamma^2]\label{eq:cGamma}
\eq
and $(\gamma_1,\gamma_2)^\ast$ denotes the pullback action of an element $(\gamma_1,\gamma_2)\in \Gamma\times \Gamma$. In particular, the right-hand side of \eqref{eq:rzetasummation} is a well-defined distribution on $X\times X$. 
\end{lem}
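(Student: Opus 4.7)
The plan is twofold: (a) show that the Poincaré-type series on the right-hand side of \eqref{eq:rzetasummation} converges absolutely to a well-defined $\Gamma\times\Gamma$-invariant distribution on $X\times X$; (b) identify this distribution with $\tilde r_\zeta^\Gamma$, which will follow from the uniqueness of the $\L^2$-resolvent on $\gam X$ and the fact that $\zeta\in\mathcal{H}^\Gamma_\mathrm{phys}$ lies in its resolvent set.

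For (a) I would start from the heat-kernel representation
\bqn
r_\zeta(x,y)=\int_0^\infty e^{(\norm{\rho}^2+\zeta^2)t}\,p_t^X(x,y)\,\d t,
\eqn
valid precisely when $\Re(\zeta^2)+\norm{\rho}^2$ lies strictly to the left of $\Spec(\Delta_X)=[\norm{\rho}^2,\infty)$, i.e.\ when $(\Im\zeta)^2>(\Re\zeta)^2$, as is ensured by our hypothesis. Combined with the sharp heat-kernel bound on a rank-one symmetric space, which carries the geometric prefactor $e^{-\norm{\rho}d_X(x,y)}$ coming from the volume growth as well as a Gaussian factor $e^{-d_X(x,y)^2/(4t)}$, a saddle-point evaluation of the $t$-integral produces a pointwise bound
\bqn
|r_\zeta(x,y)|\leq C_\zeta\,(1+d_X(x,y))^N\,e^{-(\norm{\rho}+\sqrt{(\Im\zeta)^2-(\Re\zeta)^2})\,d_X(x,y)}
\eqn
for $d_X(x,y)\geq 1$, with the usual locally integrable Green's-function singularity near the diagonal. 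Since $\Im\zeta>\sqrt{(\Re\zeta)^2+c_\Gamma}$ rearranges as $(\Im\zeta)^2-(\Re\zeta)^2>c_\Gamma\geq \delta_\Gamma^2$, the exponent above strictly exceeds $\norm{\rho}+\delta_\Gamma$; the slack $\sqrt{c_\Gamma}-\delta_\Gamma$ absorbs the polynomial prefactor, and the definition of $\delta_\Gamma$ in \eqref{eq:criticalexponent} then gives absolute convergence of $\sum_{\gamma\in\Gamma}(e,\gamma)^\ast r_\zeta$ after pairing with any $\varphi\in \CT(X\times X)$. The resulting distribution $T_\zeta$ is $\Gamma\times\Gamma$-invariant by a harmless reindexing.

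For (b), $T_\zeta$ descends to some $\bar T_\zeta\in\D'((\gam X)\times(\gam X))$ under the identification $\D'(X\times X)^{\Gamma\times\Gamma}=\D'((\gam X)\times(\gam X))$. Each summand $(e,\gamma)^\ast r_\zeta$ satisfies $(\Delta_X-\norm{\rho}^2-\zeta^2)(e,\gamma)^\ast r_\zeta=\delta_{\{(x,y)\,:\,x=\gamma y\}}$ in the first variable; dualising onto a compactly supported test function allows the differential operator to be moved inside the series, so summing over $\gamma$ and projecting to the quotient identifies $\bar T_\zeta$ as a fundamental solution of $\Delta_{\gam X}-\norm{\rho}^2-\zeta^2$. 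A Schur test based on the pointwise bound above, applied after the $\Gamma$-summation, shows that the associated operator is bounded on $\L^2(\gam X)$. Since $\zeta$ lies in the resolvent set of $\Delta_{\gam X}-\norm{\rho}^2$ by hypothesis, this pins $\bar T_\zeta$ down uniquely as $r_\zeta^\Gamma$, which is precisely \eqref{eq:rzetasummation}.

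The main technical obstacle is extracting the sharp off-diagonal exponent $\norm{\rho}+\sqrt{(\Im\zeta)^2-(\Re\zeta)^2}$: the $\norm{\rho}$ contribution must appear explicitly (via the $\sinh$-factors in the Jacobian of geodesic polar coordinates on $X$) rather than being swallowed by constants, so that the convergence threshold for the Poincaré series matches the geometric $\delta_\Gamma$ even in the regime $\delta_\Gamma>\norm{\rho}$, where $c_\Gamma$ acquires its extra contribution $(\delta_\Gamma-\norm{\rho})^2$. Once the estimate is in place, the rest is bookkeeping via the identification $\D'(X\times X)^{\Gamma\times\Gamma}=\D'((\gam X)\times(\gam X))$ and the uniqueness of the $\L^2$-resolvent.
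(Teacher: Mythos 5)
Your proof is correct in substance but takes a genuinely different route from the paper. The paper does not estimate $r_\zeta$ from first principles: it splits the test-function pairing $\sum_\gamma |((e,\gamma)^*r_\zeta)(f)|$ into the finitely many $\gamma$ for which $(\supp f)\cap (e,\gamma^{-1})\cdot(\supp f)\neq\emptyset$ (handled by proper discontinuity of the $\Gamma$-action, which makes $D_\Gamma$ closed) and the remaining $\gamma$, for which it invokes the off-diagonal decay estimate of \cite[Lemma 8]{BO12}. That estimate gives $|((e,\gamma)^*r_\zeta)(f)|\le C\norm{f}_\infty e^{-a\sqrt{-\max(0,\delta_\Gamma-\norm{\rho})^2-\Re(\zeta^2)}}$ with $a=\inf_{(x,y)\in\supp f}d_X(x,\gamma y)$, and requiring the exponent to beat $\delta_\Gamma$ is precisely what produces the threshold $c_\Gamma$. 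The identification with $\tilde r_\zeta^\Gamma$ is then delegated to the end of \cite[Lemma 9]{BO12}. You instead re-derive a pointwise bound on $r_\zeta$ via the subordinated heat-kernel formula together with the sharp Anker--Ji/Anker--Ostellari upper bound carrying the exact $e^{-\norm{\rho}d-\norm{\rho}^2 t-d^2/(4t)}$ factor, and a saddle-point evaluation. This gives the much stronger decay rate $\norm{\rho}+\sqrt{(\Im\zeta)^2-(\Re\zeta)^2}$, so you would in fact prove convergence on the larger set $(\Im\zeta)^2>(\Re\zeta)^2+\max(0,\delta_\Gamma-\norm{\rho})^2$, which in particular contains the region cut out by $c_\Gamma$. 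Your part (b), via the fundamental-solution property plus a Schur test (which after unfolding reduces to $\sup_x\int_X|r_\zeta(x,y)|\,\d y<\infty$) plus uniqueness of the $L^2$-resolvent on $\mathcal H^\Gamma_\mathrm{phys}$, is a legitimate standalone replacement for the paper's citation of \cite{BO12}. The trade-off is that your route imports the sharp heat-kernel estimates, a nontrivial external input, in exchange for a tighter convergence region; the paper settles for a cruder threshold but stays closer to the toolbox of \cite{BO12}.

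Two small blemishes worth fixing. First, the remark that ``the slack $\sqrt{c_\Gamma}-\delta_\Gamma$ absorbs the polynomial prefactor'' is misleading: when $\delta_\Gamma\le\norm{\rho}$ one has $\sqrt{c_\Gamma}=\delta_\Gamma$ and that slack vanishes; the actual slack you should invoke is $\norm{\rho}+\sqrt{(\Im\zeta)^2-(\Re\zeta)^2}-\delta_\Gamma>\norm{\rho}>0$, which is more than enough. Second, the absolute convergence argument should explicitly separate the finitely many $\gamma$ for which the shifted diagonal $\{x=\gamma y\}$ meets $\supp\varphi$ (these are handled by local integrability of the Green's-function singularity, not by the off-diagonal bound) from the rest; this is exactly the splitting the paper performs, and it is needed to make ``pairing with any $\varphi\in\CT(X\times X)$'' rigorous.
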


\begin{proof}
We argue as in \cite[proof of Lemma 9]{BO12}, adding (and repeating) some details. Let $\mathrm{diag}_X\subset X\times X$ be the diagonal and define $$D_\Gamma:=(\{e\}\times \Gamma)\cdot \mathrm{diag}_X=(\Gamma\times\{e\})\cdot \mathrm{diag}_X\subset X\times X.$$
The proper discontinuity of the $\Gamma$-action on $X$ implies that $D_\Gamma$ is a closed subset of $X\times X$ (c.f.\ \cite[Thm.\ 11 (6)]{kapovich23}). 
As pointed out in \cite[p.\ 137]{BO12}, the distribution $r_\zeta$ is smooth outside the diagonal, which implies that on  $X\times X\setminus D_\Gamma$ all the distributions $(e,\gamma)^\ast r_\zeta$, $\gamma \in \Gamma$, are smooth. Now, let $f\in \CT(X\times X)$ and let $\Gamma=\{\gamma_1,\gamma_2,\ldots\}$ be an enumeration of the group $\Gamma$. Then we have for each $N\in \N$
\begin{align*}
&\sum_{n=1}^N \big|((e,\gamma_n)^\ast r_\zeta)(f)\big|=\sum_{\substack{n\leq N:\\  (\supp f)\cap (e,\gamma_n^{-1})\cdot(\supp f)=\emptyset}} \big|((e,\gamma_n)^\ast r_\zeta)(f)\big|\\
&\qquad\qquad\qquad\qquad\quad+\sum_{\substack{n\leq N:\\  (\supp f)\cap (e,\gamma_n^{-1})\cdot(\supp f)\neq\emptyset}} \big|((e,\gamma_n)^\ast r_\zeta)(f)\big|\\
&\leq \sum_{\substack{n\leq N:\\  (\supp f)\cap (e,\gamma_n^{-1})\cdot(\supp f)=\emptyset}}\hspace*{-2em} \big|((e,\gamma_n)^\ast r_\zeta)(f)\big|\;+\sum_{\substack{\gamma\in \Gamma:\\ (\supp f)\cap (e,\gamma)\cdot(\supp f)\neq\emptyset}}\hspace*{-2em} \big|((e,\gamma)^\ast r_\zeta)(f)\big|.
\end{align*}
Here the last term is a finite sum because the $(\{e\}\times \Gamma)$-action on $X\times X$ is properly discontinuous. On the other hand, for each $\gamma \in \Gamma$ such that $(\supp f)\cap (e,\gamma^{-1})\cdot(\supp f)=\emptyset$, we can write
\begin{align*}
\big|((e,\gamma)^\ast r_\zeta)(f)\big|&=\big|((e,\gamma)^\ast r_\zeta)(f|_{X\times X\setminus D_\Gamma})\big|\\
&=\Big| \int_{X\times X \setminus D_\Gamma}((e,\gamma)^\ast r_\zeta)(x,y)\overline{f(x,y)}\,\d x \d y\Big|
\end{align*}
and then apply \cite[Lemma 8]{BO12}, as well as the expression for the minimum of the spectrum of $\Delta_{\gam X}$ stated above, to estimate for every compact set $W\subset \mathcal{H}^\Gamma_\mathrm{phys}$:
\begin{align*}
 \forall\; \zeta\in W:  \quad
 \big|((e,\gamma)^\ast r_\zeta)(f)\big|&\leq C \norm{f}_\infty e^{-a\sqrt{-\max(0,\delta_\Gamma-\norm{\rho})^2-\Re(\zeta^2)}}
\end{align*}
with a constant $C>0$ depending on $W$ and $\supp f$, and the exponent
$$
a:=\inf_{(x,y)\in \supp f}d_X(x,\gamma y)>0.
$$
Now, in view of the definition of the critical exponent $\delta_\Gamma$, the condition for convergence as $N\to \infty$ becomes 
$$
\sqrt{-\max(0,\delta_\Gamma-\norm{\rho})^2-\Re(\zeta^2)}>\delta_\Gamma,
$$ 
which is equivalent to 
$$
(\Im \zeta)^2>(\Re\zeta)^2+\underbrace{\delta_\Gamma^2+\max(0,\delta_\Gamma-\norm{\rho})^2}_{=c_\Gamma}.
$$
The convergence estimate depends on $f$ only via $\supp f$ and $\norm{f}_\infty$, so that we obtain distributional convergence of the right-hand side of \eqref{eq:rzetasummation} for all $\zeta\in \mathcal{H}_\mathrm{phys}$ with $\Im \zeta>\sqrt{(\Re \zeta)^2 +c_\Gamma}$.

It remains to show that the right-hand side of \eqref{eq:rzetasummation}  equals $\tilde r_\zeta^\Gamma$, which can be done exactly as in the end of the proof of \cite[Lemma 9]{BO12}.
\end{proof}

\begin{rem}In \eqref{eq:rzetasummation} one could equivalently take $\sum_{\gamma \in \Gamma}(\gamma,e)^\ast r_\zeta$: since $\Delta_X$ is $G$-equivariant, so is each of its resolvents $R_\zeta$ for $\zeta\in \mathcal{H}_\mathrm{phys}$. Thus $r_\zeta$ satisfies $(g,g^{-1})\cdot r_\zeta=r_\zeta$ for all $g \in G$.
\end{rem}

By \cite[Lemma 6]{BO12}, $r_\zeta$ and $r_\zeta^\Gamma$ (equivalently $\tilde r_\zeta^\Gamma$) extend to $\C$ as meromorphic families of distributions.  
The meromorphic families of operators $\CT(X)\to \D'(X)$ defined by the distribution kernels $r_\zeta$ and $\tilde r_\zeta^\Gamma$ will be denoted by $R_\zeta$ and $\tilde R_\zeta^\Gamma$, respectively (where $\tilde R_\zeta^\Gamma$ maps actually into $\D'(X)^\Gamma$ thanks to the $\Gamma\times \Gamma$-invariance of $\tilde r_\zeta^\Gamma$), while we denote the meromorphic family of operators $\CT(\gam X)\to \D'(\gam X)$ with distribution kernels $r_\zeta^\Gamma$ by $R_\zeta^\Gamma$. This is compatible with the previous notation in the sense that $R_\zeta$ and $R_\zeta^\Gamma$ are meromorphic extensions of the operator families from \eqref{eq:RL2} with domains shrinked to compactly supported smooth functions and codomains enlarged to distributions.

\section{Scattering matrices}\label{sec:scatteringmatrices}

In this section we keep the setup from Section~\ref{sec:ressccc} and follow \cite{BO00} in the definition of  $\lambda\mapsto S_\lambda^\Gamma$ as a meromorphic family of the scattering matrices for $\Gamma\backslash X$. It is our goal to relate the poles of this family, which we call \emph{scattering poles}, to the resonances of $\Gamma\backslash X$, i.e. the poles of the meromorphically continued family $\zeta\mapsto R^\Gamma_\zeta$ of resolvents.  

\subsection{Spherical principal series representations}\label{sec:princseries}

In order to introduce the scattering matrices we have to review some facts about spherical principal series representations. We warn the reader that there is no fixed standard notation in the literature. As we will have to use spectral information from different sources, we will have to keep track of the different conventions (see Remark~\ref{rem:Conventions1} below).

For $\lambda\in\aL_\C^*$ we define the one-dimensional group representation
\bq
\sigma_\lambda: P=MAN\to \C\setminus \{0\},\quad man\mapsto a^{\rho+\lambda}\label{eq:sigmalambda}
\eq
and the associated homogeneous line bundle 
\[
V(\sigma_\lambda):=G\times_P \C\to G/P, \quad [g,z]\mapsto gP,
\]
where $[g,z]$ is the $P$-orbit of $(g,z)\in G\times \C$ with respect to the right $P$-action given by $(g,z)\cdot p=(gp,\sigma_\lambda(p^{-1})z)$. Note that $G$ acts on the total space of $V(\sigma_\lambda)$ from the left. The  space $C^\infty(G/P;V(\sigma_\lambda))$ of all smooth sections $s:G/P\to V(\sigma_\lambda)$ can be identified with
\begin{align}
H^{\lambda,\infty}&:=\{f\in \Cinft(G)\mid\forall g\in G, p\in P: f(gp)=\sigma_\lambda(p)^{-1}f(g)\}\label{eq:Hsigmalambda}
\end{align}
so that $f\in H^{\lambda,\infty}$ corresponds to the section $s$ of $V(\sigma_\lambda)$ with $s(gP)=[g,f(g)]$. 

The left-regular re\-pre\-sen\-ta\-tion of $G$ on $H^{\lambda,\infty}$ will be denoted by $\pi^{\lambda,\infty}$:
\bq
\big(\pi^{\lambda,\infty}(g)f\big)(g'):= f(g^{-1}g'),\qquad f\in H^{\lambda,\infty}.\label{eq:pilambdasigma}
\eq
It is called (spherical, smooth) principal series representation with parameter $\lambda$. 

Due to the Iwasawa decomposition the restriction map
\begin{align}\begin{split}
H^{\lambda,\infty}&\to \Cinft(K)^M:=\{f\in \Cinft(K) \mid \forall k\in K,m\in M: f(km)=f(k)\}\label{eq:Hlambdaiso}\\
f &\mapsto f|_K\end{split}
\end{align}
is an isomorphism of vector spaces. This allows us to consider $H^{\lambda,\infty}$ and $C^\infty(G/P;V(\sigma_\lambda))$ as $\lambda$-independent vector spaces. It also shows that $V(\sigma_\lambda)$ is in fact a trivial line bundle but we will soon pass to its $\Gamma$-quotient, which  no longer needs to be trivial. 

We equip the vector space $C^{\infty}(G/P;V(\sigma_{\lambda}))\cong H^{\lambda,\infty}$ with the Fr\'echet topology such that  \eqref{eq:Hlambdaiso} is an isomorphism of topological vector spaces, where $\Cinft(K)^M$ is equipped with the subspace topology inherited from the standard Fr\'echet topology on $\Cinft(K)$. This way, we can consider $H^{\lambda,\infty}$ and $C^\infty(G/P;V(\sigma_\lambda))$ as $\lambda$-independent Fr\'echet spaces.

By restricting the line bundle  $V(\sigma_\lambda)$ to $\Omega\subset G/P=\partial X$ and passing to the $\Gamma$-quotient, we obtain the locally homogeneous  vector bundle
\[
V_{\Gamma\backslash \Omega}(\sigma_{\lambda}):=\gam V(\sigma_\lambda)|_\Omega
\]
over the smooth manifold $\Gamma\backslash \Omega$. The vector space $C^\infty(\Gamma\backslash \Omega;V_{\Gamma\backslash \Omega}(\sigma_\lambda))$ corresponds canonically to the space of $\Gamma$-invariant smooth sections of $V(\sigma_\lambda)|_\Omega$, which in turn corresponds canonically to a closed subspace of left-$\Gamma$-invariant functions in $H^{\lambda,\infty}$.  
 Thus, the space $C^\infty(\Gamma\backslash \Omega;V_{\Gamma\backslash \Omega}(\sigma_\lambda))$ inherits a Fr\'echet topology from $H^{\lambda,\infty}$. In particular, we can consider $C^\infty(\Gamma\backslash \Omega;V_{\Gamma\backslash \Omega}(\sigma_\lambda))$  as a  $\lambda$-independent Fr\'echet space, too. 

 \begin{defn}\label{def:distribsec}We call the topological vector space
\[
C^{-\infty}(G/P;V(\sigma_\lambda)):=C^\infty(G/P;V(\sigma_{-\lambda}))',
\] 
where ``prime'' denotes the topological dual, the space of \emph{distributional sections} of $V(\sigma_\lambda)$. 
\end{defn}
We equip $C^{-\infty}(G/P;V(\sigma_\lambda))$ with the dual $G$-representation of $\pi^{-\lambda,\infty}$, which we denote by $\pi^{\lambda,-\infty}$, and write the subspace of $\Gamma$-invariant elements as  ${}^\Gamma C^{-\infty}(G/P;V(\sigma_\lambda))$. The map
 \begin{align}
\begin{split}
 C^{\infty}(G/P;V(\sigma_\lambda))\cong H^{\lambda,\infty}&\to C^{-\infty}(G/P;V(\sigma_\lambda)),\\
 f&\mapsto \eklm{f|_K,\cdot}_{\L^2(K)}\label{eq:injCinftDistr}\end{split}
 \end{align}
 is well-defined, injective (since the restriction \eqref{eq:Hlambdaiso} is injective), and continuous. Moreover,  the injection \eqref{eq:injCinftDistr} is $G$-equivariant; for this, the  sign change of $\lambda$ in Definition \ref{def:distribsec} is essential.   
By virtue of this equivariant injection we can view the principal series representation $(\pi^{\lambda,\infty},C^{\infty}(G/P;V(\sigma_\lambda)))$ as a subrepresentation of $(\pi^{\lambda,-\infty},C^{-\infty}(G/P;V(\sigma_\lambda)))$. We  call the latter  \emph{distributional principal series representation} with parameter $\lambda$.

\subsection{Comparing conventions}

We will have to keep track of the conventions for meromorphic continuations with respect to complex parameters used in the various sources we rely on. To simplify this task we include a couple of remarks containing the pertinent comparisons.

\begin{rem}[Signs and left/right regular representation]\label{rem:Conventions1} Our definitions \eqref{eq:sigmalambda}, \eqref{eq:Hsigmalambda} differ from the ones in \cite[Section 3]{BO00}, \cite[\S~2.2]{Ol} and \cite[\S~2]{HHP19} by the sign of $\lambda$. We chose this sign convention to be consistent with \cite[VII, §1]{Kna86}. This is also consistent with the sign convention in \cite[Chap.~3]{VoganUnitaryRep}. In all these references the $G$-action is the left regular action. 
On the other hand, \cite[\S~10.1]{W92} and \cite{Knapp-Stein} use the right regular version of the principal series representations. Note that the map $f\mapsto \check{f}$ with $\check f(g):=f(g^{-1})$ intertwines left and right regular actions. Up to this intertwiner the sign conventions in  \cite[\S~10.1]{W92} and \cite{Knapp-Stein} agree with the sign conventions in \cite[VII, §1]{Kna86}.  Finally, we point out that our topologies on  $C^{\infty}(G/P;V(\sigma_{\lambda}))$ and $C^\infty(\Gamma\backslash \Omega;V_{\Gamma\backslash \Omega}(\sigma_\lambda))$ are equivalent to those of \cite[p.~89]{BO00}.
\end{rem}

\begin{rem}[Norms and identifications]\label{rem:comparing conventions_neu} Recall from Section \ref{sec:ressccc} that we fixed an inner product $\eklm{\cdot,\cdot}$ on $\g$. In \cite[\S~6.1]{BO12} there is the freedom to choose a bilinear form $b$ inducing the Riemannian metric on $G/K$, and we let $b$ be given by $\eklm{\cdot,\cdot}$. Then the norms on $\g$ and $\g^\ast\cong \g$ (using the isomorphism provided by $\eklm{\cdot,\cdot}$) used here, in \cite[\S~6]{BO12}, and in  \cite{HHP19} all agree. 

We use the convention from \cite[\S~6.1]{BO12} that the complex vector space $\aL^\ast_\C$ is identified with $\C$ by complex linear extension of the unique order-preserving isometric isomorphism $\aL^\ast\cong \R$. This means that $\rho\in \aL^\ast$ is identified with $\norm{\rho}\in \R$ (which accordingly is written simply as $\rho$ in \cite[\S~6]{BO12}), and a general a number $z\in \C$ corresponds to the element
\bq
\frac{z}{\norm{\rho}}\rho\in \aL^\ast_\C.\label{eq:lambdaz}
\eq
This identification differs slightly from that in \cite{HHP19}, where $\rho\in \aL^\ast$ gets identified with $\frac{\norm{\rho}}{\norm{\alpha_0}}\in \R$, $\alpha_0\in \Sigma^+\subset \aL^\ast$ being the unique reduced positive root, and consequently a general number $z\in \C$ corresponds to $\norm{\alpha_0}$ times the element \eqref{eq:lambdaz}.  
\end{rem}

\begin{rem}[{Branched covers, physical sheets and half-planes}]\label{rem:Conventions3}Our choice of the physical half-plane $\mathcal{H}_\mathrm{phys}=\{\zeta\in \C\,|\, \Im \zeta >0\}$ is compatible with \cite{HHP19} when taking into account the sign change mentioned in Remark \ref{rem:Conventions1}. On the other hand, Bunke and Olbrich work in \cite[p.\ 135]{BO12} with the right half-plane $\mathcal{H}_\mathrm{phys,r}^{\mathrm{BO}}:=\{\lambda\in \C\,|\,\Re \lambda >0\}$ but their argument works just as well with the left half-plane $\mathcal{H}_\mathrm{phys,l}^{\mathrm{BO}}:=\{\lambda\in \C\,|\,\Re \lambda <0\}$.

Let ${\widetilde \C}^\pm$ be the branched covers of $\C$ on which the functions 
\bq
{\widetilde \C}^\pm\owns z\mapsto \sqrt{\pm(\norm{\rho}^2-z)}=:\zeta_\pm(z)\in \C\label{eq:pmfcns}
\eq are holomorphic, respectively.  In \cite[p.\ 135]{BO12} Bunke and Olbrich work with the function $\zeta_+$, while Hansen, Hilgert and Parthasarathy work with $\zeta_-$ in  \cite{HHP19}.  
We call $\widetilde\C^-_\mathrm{phys}:=\zeta_-^{-1}(\mathcal{H}_\mathrm{phys})\subset \widetilde \C^-$ the \emph{physical sheet} of $\widetilde \C^-$ and  $\widetilde\C^+_\mathrm{phys,r}:=\zeta_+^{-1}(\mathcal{H}_\mathrm{phys,r}^{\mathrm{BO}}),\widetilde\C^+_\mathrm{phys,l}:=\zeta_+^{-1}(\mathcal{H}_\mathrm{phys,l}^{\mathrm{BO}})\subset \widetilde \C^+$ the \emph{positive/negative physical sheets} of $\widetilde \C^+$, respectively.  We emphasize that there is no preferred choice of the physical sheet in \cite[Section 6]{BO12}, so we can use their results for parameters in $\mathcal{H}_\mathrm{phys,r}^{\mathrm{BO}}$ as well as in $\mathcal{H}_\mathrm{phys,l}^{\mathrm{BO}}$. 

As before, we will denote the complex numbers in our physical half-plane by $\zeta$, while we will write $\lambda$ for the numbers in $\mathcal{H}_\mathrm{phys,r}^{\mathrm{BO}}$ and $\mathcal{H}_\mathrm{phys,l}^{\mathrm{BO}}$. We have obvious isomorphisms $\mathcal{H}_\mathrm{phys}\cong \mathcal{H}_\mathrm{phys,r}^{\mathrm{BO}}$, $\mathcal{H}_\mathrm{phys}\cong \mathcal{H}_\mathrm{phys,l}^{\mathrm{BO}}$ given by
\bq
\mathcal{H}_\mathrm{phys}\owns \zeta\mapsto \lambda=\begin{cases}-i\zeta &\in \mathcal{H}_\mathrm{phys,r}^{\mathrm{BO}},\\
i\zeta&\in \mathcal{H}_\mathrm{phys,l}^{\mathrm{BO}}.\end{cases}\label{eq:zetaoflambda}
\eq
\end{rem}

\subsection{Scattering matrices} We build on the descriptions of scattering matrices given in \cite{BO00} and \cite{HHP19}.

Then, according to \cite[pp. 79--80]{BO00}, using their isometric identification $\C\owns \lambda\leftrightarrow \lambda\frac{\rho}{\norm{\rho}}\in \aL_\C^\ast$, we have the three  meromorphic families on $\C$ of maps
\begin{eqnarray}
\mathrm{ext}_\lambda&:&C^{-\infty}(\Gamma\backslash \Omega, V_{\Gamma\backslash \Omega}(\sigma_\lambda))
\to {}^\Gamma C^{-\infty}(\partial X, V(\sigma_\lambda))\\ 
\mathrm{res}_\lambda&:&{}^\Gamma C^{-\infty}(\partial X, V(\sigma_\lambda))\mapsto C^{-\infty}(\Gamma\backslash \Omega, V_{\Gamma\backslash \Omega}(\sigma_\lambda))\\ 
\hat J_\lambda&:&C^{-\infty}(\partial X, V(\sigma_\lambda))
\to C^{-\infty}(\partial X, V(\sigma_\lambda)), 
\end{eqnarray}
where the latter is the ``unnormalized'' Knapp-Stein intertwiner described in detail in \cite[Thm.~3]{Knapp-Stein} (in which the identification $\C\owns \lambda\leftrightarrow \lambda\rho\in \aL_\C^\ast$ is used, so that our $\lambda$ corresponds to $\frac{\lambda}{\norm{\rho}}$ in \cite[Thm.~3]{Knapp-Stein}). Recall from Remark \ref{rem:Conventions1} that the $P$-representation $\sigma_\lambda$ defined in \eqref{eq:sigmalambda} is the same as in \cite{Knapp-Stein} but corresponds to $\sigma_{-\lambda}$ in \cite{BO00}. 
The Harish-Chandra $c$-function is the meromorphic function on $\aL^\ast_\C$ satisfying
\[
c(\lambda\rho):=\int_{\bar N}e^{-(\lambda+1)\rho(H(\bar n))}\d \bar n,\qquad \forall\; \lambda \in \C,\;\Re\lambda >0.
\]
In view of the chosen identification of $\aL_\C^\ast$ with $\C$ and the sign difference in the conventions for the principal series parameter used here and in \cite{BO00}, respectively, it is convenient to consider the meromorphic function $c_\C$ on $\C$ defined by
\[
c_\C(\lambda):=c\Big(\frac{-\lambda}{\norm{\rho}}\rho\Big).
\] 
Then we define the normalized  Knapp-Stein intertwiner
\[
J_\lambda:=\frac{1}{c_\C(\lambda)} \hat J_\lambda.
\]
Note that our $J_\lambda$ corresponds to $J_{\mathbf{1},-\lambda}$ in \cite[p.\ 101]{BO00}. 

Recall the critical exponent $\delta_\Gamma$ from \eqref{eq:criticalexponent} and note that if $\delta_\Gamma^{\mathrm{BO}}$ denotes the critical exponent from \cite[Def.\ 2.2]{BO00}, then $\delta_\Gamma=\delta_\Gamma^{\mathrm{BO}}+\norm{\rho}$. 

\begin{defn}[Scattering matrices, {\cite[Def.~5.6]{BO00}}] 
Let $\mathrm{Re}(\lambda)<\norm{\rho}-\delta_\Gamma$. Then $S_\lambda^\Gamma:= \mathrm{res}_\lambda\circ J_\lambda\circ \mathrm{ext}_\lambda$ is called the $\lambda$-\emph{scattering matrix} for $\Gamma\backslash X$. 
\end{defn}

Note that our $S_\lambda^\Gamma$ corresponds to $S_{-\lambda}$ in \cite[Def.~5.6]{BO00}.

From \cite[Lemma~5.7]{BO00} we know that for $\mathrm{Re}(\lambda)<\norm{\rho}-\delta_\Gamma$ the family 
\[\lambda\mapsto S_\lambda^\Gamma: C^{\pm\infty}(\Gamma\backslash \Omega, V_{\Gamma\backslash \Omega}(\sigma_\lambda))\to C^{\pm\infty}(\Gamma\backslash \Omega, V_{\Gamma\backslash \Omega}(\sigma_{-\lambda}))
\]
is meromorphic. More precisely, we have the following result. 
\begin{prop}[Scattering matrices, {\cite[Thm.~5.10]{BO00}}]\label{prop:BOext} Suppose that $G\not=F_4^{-20}$. Then the families 
\[\lambda\mapsto S_\lambda^\Gamma: C^{\pm\infty}(\Gamma\backslash \Omega, V_{\Gamma\backslash \Omega}(\sigma_\lambda))\to C^{\pm\infty}(\Gamma\backslash \Omega, V_{\Gamma\backslash \Omega}(\sigma_{-\lambda}))
\]
and 
\[\lambda\mapsto \mathrm{ext}_\lambda: C^{-\infty}(\Gamma\backslash \Omega, V_{\Gamma\backslash \Omega}(\sigma_\lambda))\to  {}^\Gamma C^{-\infty}(\partial X, V(\sigma_\lambda)),
\]
originally defined for $\mathrm{Re}(\lambda)<\norm{\rho}-\delta_\Gamma$, have meromorphic continuations to all of $\C$. The family $\mathrm{ext}$ has at most finite-dimensional singularities. Moreover, the following identities hold:
\begin{eqnarray}
\mathrm{ext}_\lambda&=&J_{-\lambda}\circ \mathrm{ext}_\lambda \circ S^\Gamma_\lambda,\\
S^\Gamma_{-\lambda}\circ S^\Gamma_\lambda&=&\mathrm{id}.
\end{eqnarray}  
\end{prop}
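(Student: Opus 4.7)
The plan is to address the three assertions in sequence: first construct $\mathrm{ext}_\lambda$ as a holomorphic family on the half-plane $\{\Re\lambda<\|\rho\|-\delta_\Gamma\}$, then meromorphically continue it across the critical barrier (the central step), and finally derive the two identities by analytic continuation from a region where a Poincar\'e series representation of $\mathrm{ext}_\lambda$ is available.

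For the initial construction, given $\phi\in C^{-\infty}(\Gamma\backslash\Omega,V_{\Gamma\backslash\Omega}(\sigma_\lambda))$, I would lift $\phi$ to a $\Gamma$-invariant distribution $\tilde\phi$ on $\Omega$, pick a smooth cutoff $\chi$ supported in $\Omega$ whose $\Gamma$-translates partition a neighborhood of $\Omega$, and define $\mathrm{ext}_\lambda\phi:=\sum_{\gamma\in\Gamma}\pi^{\lambda,-\infty}(\gamma)(\chi\tilde\phi)$, the product $\chi\tilde\phi$ being extended by zero through $\Lambda$. Transferring to $K$ via the identification \eqref{eq:Hlambdaiso}, the cocycle weight appearing in $\pi^{\lambda,-\infty}(\gamma)$ is controlled by a power of $e^{\rho H(\gamma^{-1}k)}$ twisted by $\lambda$, so that convergence reduces, after pairing against a compactly supported test section, to summability of $\sum_\gamma e^{-s d_X(o,\gamma o)}$ at $s=\|\rho\|-\Re\lambda$; this is precisely the condition that $\Re\lambda<\|\rho\|-\delta_\Gamma$. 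Holomorphy on this half-plane follows from uniform absolute convergence on compact subsets.

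The hard step is the meromorphic continuation of $\mathrm{ext}_\lambda$ past $\Re\lambda=\|\rho\|-\delta_\Gamma$. I would exploit two structural facts: by construction $\mathrm{res}_\lambda\circ\mathrm{ext}_\lambda=\mathrm{id}$, so any continuation is determined modulo the kernel of $\mathrm{res}_\lambda$; and this kernel consists of $\Gamma$-invariant distributions on $\partial X$ supported in $\Lambda$, a space which is generically finite-dimensional. Following the Bunke--Olbrich strategy, one constructs, near each target $\lambda_0\in\C$, a local meromorphic parametrix for $\mathrm{ext}_\lambda$ by composing the already-continued normalized Knapp--Stein intertwiner $J_\lambda$ with cutoff operators supported away from $\Lambda$, and then closes the construction by a Fredholm-type argument whose residual operator acts on the finite-dimensional limit-set contribution. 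The finite-dimensionality of this residual is what delivers the ``at most finite-dimensional singularities'' assertion. This is also the step at which the exclusion of $G=F_4^{-20}$ enters: the case-by-case $K$-type multiplicity and infinitesimal character analysis that underlies the control of the residual operator in \cite{BO00} is carried out only for the classical rank-one groups.

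Once $\mathrm{ext}_\lambda$ is meromorphic, so is $S^\Gamma_\lambda=\mathrm{res}_\lambda\circ J_\lambda\circ\mathrm{ext}_\lambda$ as a composition of meromorphic families. Both stated identities then reduce to computations on the initial convergence region, extended by the identity theorem for meromorphic families. Concretely, the normalized Knapp--Stein functional equation $J_{-\lambda}\circ J_\lambda=\mathrm{id}$, combined with the fact that applying $J_\lambda$ to $\mathrm{ext}_\lambda\phi$ produces a $\Gamma$-invariant distribution whose restriction to $\Omega$ is precisely $S^\Gamma_\lambda\phi$, gives the first identity after applying $J_{-\lambda}$ on the left; the second identity $S^\Gamma_{-\lambda}\circ S^\Gamma_\lambda=\mathrm{id}$ then follows by substituting the first into the definition of $S^\Gamma_{-\lambda}$ and using $\mathrm{res}_{-\lambda}\circ\mathrm{ext}_{-\lambda}=\mathrm{id}$. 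The main obstacle throughout is the meromorphic continuation of $\mathrm{ext}_\lambda$; once that is in hand, the downstream assertions follow essentially from the Knapp--Stein functional equation and the defining uniqueness of the extension map.
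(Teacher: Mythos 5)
The paper does not supply its own proof: this proposition is imported directly from Bunke--Olbrich \cite[Thm.~5.10]{BO00}, so there is no internal argument to compare yours against. Your reconstruction has the right shape as a summary of the Bunke--Olbrich strategy: the Poincar\'e-series realization of $\mathrm{ext}_\lambda$ with convergence governed by $\Re\lambda<\norm{\rho}-\delta_\Gamma$, the centrality of the Knapp--Stein functional equation $J_{-\lambda}\circ J_\lambda=\mathrm{id}$, and the route from the first identity to the second are all correct in spirit.

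Two caveats deserve emphasis. First, the meromorphic continuation of $\mathrm{ext}_\lambda$ is where essentially all of the difficulty lies, and your sketch (``compose $J_\lambda$ with cutoffs away from $\Lambda$, close by a Fredholm argument'') is too compressed to be checkable; in \cite{BO00} the continuation is a genuine multi-stage bootstrap in which the identity $\mathrm{ext}_\lambda=J_{-\lambda}\circ\mathrm{ext}_{-\lambda}\circ S^\Gamma_\lambda$ is itself used to reflect the half-plane of convergence into the opposite one, with meromorphic Fredholm theory supplying the finite-dimensionality of the singular part, and this is not deducible from the ingredients you list without substantial additional input. Second, your derivation of the first identity tacitly uses that $J_\lambda\circ\mathrm{ext}_\lambda\phi$ lies in the image of $\mathrm{ext}_{-\lambda}$, equivalently that $\mathrm{ext}_{-\lambda}\circ\mathrm{res}\circ J_\lambda\circ\mathrm{ext}_\lambda=J_\lambda\circ\mathrm{ext}_\lambda$. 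This is a nontrivial structural fact, established in \cite{BO00} by analytic continuation from the initial convergence region, and it is precisely what keeps the first identity from being a tautology; you should flag it rather than fold it into ``the fact that $J_\lambda\mathrm{ext}_\lambda\phi$ produces a $\Gamma$-invariant distribution whose restriction is $S^\Gamma_\lambda\phi$''. Finally, note that as printed the first displayed identity reads $\mathrm{ext}_\lambda=J_{-\lambda}\circ\mathrm{ext}_\lambda\circ S^\Gamma_\lambda$, but the bundle types only compose with $\mathrm{ext}_{-\lambda}$ in the middle, which is what you (correctly) use in your derivation of the second identity.
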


We expect that the necessity of the hypothesis $G\not=F_4^{-20}$ in Proposition~\ref{prop:BOext} is an artefact of its proof. Recall that the use of this result is the only reason why we had to restrict our attention to classical groups. We will address this issue elsewhere.

\begin{defn}[Scattering matrices for the locally and globally symmetric space, c.f.\ {\cite[\S 7]{HHP19}}] \label{def:Sop}
We call the two meromorphic operator families
\begin{align*}
\mathbf{S}_\zeta&:=J_{i\zeta},\\
\mathbf{S}_\zeta^\Gamma &:=S^\Gamma_{i\zeta}
\end{align*}
the $\zeta$-\emph{scattering matrices} for $X$.
\end{defn}

Note that our $\mathbf{S}_\zeta$ corresponds to $S_{-\frac{\zeta\alpha_0}{\norm{\alpha_0}}}$ in \cite[\S 7]{HHP19}. This takes into account that our principal series representation parameter $\lambda$ corresponds to $-\lambda$ in  \cite{HHP19}, while our resolvent $R_\zeta$ corresponds to $R_{-\frac{\zeta\alpha_0} {\norm{\alpha_0}}}=R_{\frac{\zeta\alpha_0} {\norm{\alpha_0}}}$ in \cite{HHP19}.

\section{Relating scattering matrices and resolvent kernels}

The key to the comparison between scattering matrices and resolvent kernels for \emph{global} symmetric spaces given in \cite{HHP19} was the possibility to take boundary values of Laplace eigenfunctions on $X$. The boundary value maps are given by the inverses of Poisson transforms. Moreover, the proof of \cite[Lemma~11]{BO12} suggests a way how to use the knowledge about the relation between resolvent poles and scattering poles of rank one global symmetric spaces in order to determine the corresponding relation for rank one \emph{locally} symmetric spaces. We will follow this strategy and start by reviewing a few facts about Poisson transforms and boundary value maps.

\subsection{Poisson transforms and boundary values}\label{subsec:Poisson trafo}

We define the \emph{Poisson transform} with parameter $\lambda\in \aL_\C^\ast$ by
\begin{align*}
\mathcal P_{\lambda}: C^{-\infty}(G/P;V(\sigma_{\lambda}))&\to C^\infty(G/K),\\
\mathcal P_{\lambda}(u)(g)&:= (\pi^{\lambda,-\infty}(g^{-1})u)(1_{-\lambda}),
\end{align*}
where $1_{-\lambda}\in C^{\infty}(G/P;V(\sigma_{-\lambda}))\cong H^{-\lambda,\infty}$  corresponds to the constant function on $K$ with value $1$ via the isomorphism \eqref{eq:Hlambdaiso}. If $f\in H^{\lambda,\infty}\cong C^{\infty}(G/P;V(\sigma_\lambda))$ is identified with an element of $C^{-\infty}(G/P;V(\sigma_{\lambda}))$ via the injection \eqref{eq:injCinftDistr}, then 
\bq
\mathcal P_{\lambda}(f)(g)=\int_K f(gk)\d k=\int_K e^{(\lambda-\rho)(H(g^{-1}k))} f(k)\d k. \label{eq:Poissonforfunctions}
\eq

\begin{rem}[Comparing conventions]\label{rem:Conventions2}
Note that due to our sign convention, see Remark~\ref{rem:Conventions1}, our Poisson  transform  $\mathcal P_{\lambda}$ corresponds to $P_{-\lambda}^T$ in \cite[\S~4.2]{BO12} and \cite[Def.~4.8]{BO00}, respectively to $P_{-\lambda}$ in \cite[(2)]{HHP19}. 
\end{rem}

 For generic parameters $\lambda$ the Poisson transformations are essentially inverted by  \emph{boundary value} maps. In \cite[(29)]{HHP19} this reads
\[\beta^{\mathrm{HHP}}_{\rho-\lambda}\circ P_\lambda =c(\lambda) \mathrm{Id},\]
where we decorated the boundary value map with HHP in order to avoid confusion  later on. 

We need to relate the boundary value maps  used in \cite[\S~6.3]{BO12} to the Poisson transforms used in \cite[\S~4.2]{BO12} in order to relate them to the boundary value maps $\beta_\lambda$ used in \cite[Lemma~6.1]{HHP19}. In \cite[\S~6.3]{BO12} the authors are not very precise about the construction of boundary values they are using.  They simply refer to \cite{KO77} and \cite{OS80}, but they give the set of leading exponents. We assume they are using the conventions of \cite{Ol}, which are compatible with \cite{BO00,BO12,HHP19}. Note that \cite[Thm.~4.17]{Ol} (see also \cite[Thm.~3.9]{Ol} and \cite[Lemma~9.28]{Kna86}) relates the corresponding boundary value maps with the Poisson transform in the following formula
\[\beta^{\mathrm{Olb94}}_{1,\lambda}\circ P^\gamma_{\lambda} = c_\gamma(\lambda)\, \mathrm{Id}.\]
The definitions of the $c$-functions in \cite{HHP19} and \cite{Ol} are compatible. In our situation $\sigma$ and $\gamma$ are trivial, which means that the boundary value map $\beta^{\mathrm{Olb94}}_{1,\lambda}$ from \cite{Ol} equals the boundary value map $\beta^{\mathrm{HHP}}_{\rho-\lambda}$ from \cite{HHP19}.   

In \cite[\S~6.3]{BO12} Bunke and Olbrich invoke \cite{KO77} to guarantee the existence of boundary values for eigenfunctions of the Laplacians on the two factors of $(X\times X)\setminus D_\Gamma$, see \cite[(48)]{BO12}. It is defined for parameters determined by the leading exponents of Casimir eigenfunctions (see \cite[Lemmas~10 \& 11]{BO12} and recall that in our situation $\sigma$ and $\tau$ are trivial).

In principle the construction of the boundary value maps is built on the study of asymptotic behavior of eigenfunctions. The construction in \cite{KO77} depends on a specific and very technical theory, the algebraic analysis initiated by Sato. When the boundary value map is applied to moderately growing eigenfunctions on all of $X\times X$ we can use \cite{vdBS87} to describe the boundary value maps. This description shows that the direct product situation leads to a decoupling of the asymptotics which leads to a decoupling of the boundary value map.  
Taking into account the notation and the various conventions from Remarks \ref{rem:Conventions2} and \ref{rem:Conventions3}, and using the first of the isomorphisms \eqref{eq:zetaoflambda},  the boundary value map from \cite[\S~6.3]{BO12} suitably restricted is given by
\bq
\zeta\in\mathcal{H}_\mathrm{phys}:\qquad
\beta_\nu=\beta_{\nu(\zeta)}:= \beta^{\mathrm{HHP}}_{\rho-i\frac{\zeta \alpha_0}{\norm{\alpha_0}}}\otimes \beta^{\mathrm{HHP}}_{\rho-i\frac{\zeta \alpha_0}{\norm{\alpha_0}}},\label{eq:boundarymaps}
\eq 
with $\nu\in (\mathfrak a_{\C}^*)^2\cong\C^2$ denoting the leading exponents of the eigensections one takes boundary values of. 

Recall the Schwartz kernels $r_\zeta$ and $\tilde r^\Gamma_\zeta$ of $R_\zeta$ and $\tilde R^\Gamma_\zeta$, respectively from Section~\ref{sec:resolventkernels}. Further, 
let $s_\zeta$ and $\tilde s^\Gamma_\zeta$  be the Schwartz kernels of $\mathbf{S}_\zeta$ and  the $\Gamma$-invariant lift of the Schwartz kernel of $\mathbf{S}^\Gamma_{\zeta}$,  respectively. We consider  all of these as meromorphically extended to $\C$. 

\begin{lem}\label{lem:bdyval-r-to-s} We have the identity of meromorphic functions 
\bq
s_{\zeta}|_{\partial X\times \partial X\setminus \mathrm{diag}_{\partial X}}=-2i\norm{\alpha_0}\zeta \, \beta_{\nu(\zeta)}(r_{\zeta}|_{X\times X\setminus \mathrm{diag}_X}).\label{eq:HHPformula}
\eq
\end{lem}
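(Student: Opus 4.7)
\emph{Plan of proof.} My strategy is to reduce \eqref{eq:HHPformula} to the corresponding identity on the globally symmetric space $X$ that underlies the main result of \cite{HHP19}, and then to match conventions.

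First, I verify that both sides are well-defined and that the boundary value map makes sense. Off the diagonal, $r_{\zeta}$ is smooth by elliptic regularity; moreover, $r_{\zeta}(x,y)$ is a Laplace eigenfunction of eigenvalue $z=\norm{\rho}^2+\zeta^2$ in each variable separately, since it is the kernel of the resolvent of $\Delta_X-\norm{\rho}^2-\zeta^2$ and the $G$-invariance of $R_{\zeta}$ forces the symmetry $r_{\zeta}(x,y)=r_{\zeta}(y,x)$. Consequently $\beta_{\nu(\zeta)}$ from \eqref{eq:boundarymaps} is well-defined on $r_{\zeta}|_{X\times X\setminus \mathrm{diag}_X}$, and by \eqref{eq:betacomparison} it factors as $\beta_{\rho+\lambda(z)}\otimes \beta_{\rho+\lambda(z)}$ with $\lambda(z)=-i\zeta$ on the physical sheet, consistent with \eqref{eq:zetaoflambda}.

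Second, I invoke the single-factor identity on $X$ proved (or at least assembled) in \cite[\S~6]{HHP19}. Via the spherical Plancherel decomposition $r_{\zeta}$ admits a representation as a double Poisson integral, whose boundary kernel on $\partial X\times \partial X$ can be computed explicitly. Applying $\beta_{\rho+\lambda(z)}$ in each variable and combining the inversion formula $\beta_{\rho-\lambda}\circ \mathcal P_{\lambda}=c(\lambda)\,\mathrm{Id}$ with the fact that $\beta_{\rho+\lambda}\circ \mathcal P_{\lambda}$ is proportional to the unnormalized Knapp--Stein intertwiner $\hat J_{\lambda}$ yields an off-diagonal identity of the form
\[
(\beta_{\rho+\lambda(z)}\otimes \beta_{\rho+\lambda(z)})r_{\zeta}=C(\zeta)\,s_{\zeta},
\]
for an explicit scalar $C(\zeta)$ built from $c$-function values at $\pm\lambda(z)$ and the normalization $c_{\C}(i\zeta)^{-1}$ implicit in $S_\zeta=J_{i\zeta}$.

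Finally, the conversion of conventions from \cite{HHP19} to those of \cite[\S~6]{BO12} used in our $\beta_{\nu(\zeta)}$ is performed by invoking Remarks~\ref{rem:Conventions1}--\ref{rem:Conventions3}: the sign flip $\lambda\leftrightarrow -\lambda$ between this paper and \cite{HHP19}, the rescaling $\lambda_{\mathrm{BO}}=\norm{\alpha_0}\,\lambda_{\mathrm{HHP}}$ arising from the two identifications $\aL^\ast_{\C}\cong\C$, and the substitution $\lambda=-i\zeta$, together convert $C(\zeta)^{-1}$ into $-2i\norm{\alpha_0}\zeta$, which is exactly the scalar in \eqref{eq:HHPformula}. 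I expect this last bookkeeping to be the main obstacle: producing the factor $-2i\norm{\alpha_0}\zeta$ requires reconciling three distinct convention changes simultaneously, which is precisely what the preceding remarks were designed to facilitate. Since \eqref{eq:HHPformula} is an identity of meromorphic functions, it suffices to check it on a dense open subset of the spectral parameter, which lets one sidestep the isolated values where the boundary value maps or the $c$-functions are singular.
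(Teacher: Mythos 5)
Your plan reduces the lemma to the single-space identity in \cite{HHP19}, which is also what the paper does, but via a longer and ultimately incomplete route. The paper's proof simply quotes the formula from \cite[last eq.\ on p.~20]{HHP19}, which already has the form
\[
S_\zeta \;=\; -2i\norm{\alpha_0}\zeta\,\beta_{\rho-i\frac{\zeta\alpha_0}{\norm{\alpha_0}}}\big(\beta_{\rho-i\frac{\zeta\alpha_0}{\norm{\alpha_0}}}R_\zeta\big)',
\]
and then unfolds adjoints by testing against $f\otimes g\in\CT(\partial X)\otimes\CT(\partial X)$ with disjoint supports (so that only the off-diagonal part of $r_\zeta$ contributes), identifies the result with $\beta_{\nu(\zeta)}$ using \eqref{eq:boundarymaps}, and concludes by density. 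The constant $-2i\norm{\alpha_0}\zeta$ is imported wholesale from \cite{HHP19}; no new computation of $c$-function values is required.

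Your proposal instead sketches a re-derivation of that HHP19 identity from scratch: double Poisson integral representation of $r_\zeta$, the inversion $\beta_{\rho-\lambda}\circ\mathcal P_\lambda=c(\lambda)\,\mathrm{Id}$, and the (unproven here) proportionality of $\beta_{\rho+\lambda}\circ\mathcal P_\lambda$ to $\hat J_\lambda$. This route is in principle viable, but as written it has a genuine gap: the scalar $C(\zeta)$ is never actually computed. You acknowledge that "producing the factor $-2i\norm{\alpha_0}\zeta$ requires reconciling three distinct convention changes" and that this "is the main obstacle" -- but that reconciliation, which is the entire content of the lemma, is then asserted rather than carried out. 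In particular, the proportionality constant in $\beta_{\rho+\lambda}\circ\mathcal P_\lambda \propto \hat J_\lambda$ and the normalization $c_\C(i\zeta)^{-1}$ in $S_\zeta=J_{i\zeta}$ would have to be combined explicitly and then run through the $\lambda_{\mathrm{BO}}=\norm{\alpha_0}\lambda_{\mathrm{HHP}}$ rescaling and the $\lambda\leftrightarrow -\lambda$ sign flip; nothing in your argument pins down that the net result is $-2i\norm{\alpha_0}\zeta$ rather than some other $c$-function ratio. Quoting the precomputed HHP19 formula, as the paper does, avoids reopening exactly this bookkeeping, and the passage from operators to kernels via pairing against $f\otimes g$ with disjoint supports is the step you would still need to make your ``off-diagonal identity'' rigorous.
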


\begin{proof} Let $f,g\in \Cinft(\partial X)$ have disjoint supports. Then recalling \eqref{eq:boundarymaps} and applying \cite[last eq.\ on p.\ 20]{HHP19} with the parameter ${-\frac{\zeta\alpha_0}{\norm{\alpha_0}}}$ gives
\begin{align*}
\eklm{s_{\zeta},f\otimes g}&=\eklm{\mathbf{S}_\zeta f,g }\\
&=-2i\norm{\alpha_0}\zeta\big<\beta^{\mathrm{HHP}}_{\rho-i\frac{\zeta \alpha_0}{\norm{\alpha_0}}}(\beta^{\mathrm{HHP}}_{\rho-i\frac{\zeta \alpha_0}{\norm{\alpha_0}}}R_\zeta)'f,g\big>\\
&=-2i\norm{\alpha_0}\zeta\big<R_\zeta\beta^{\mathrm{HHP}'}_{\rho-i\frac{\zeta \alpha_0}{\norm{\alpha_0}}}f,\beta^{\mathrm{HHP}'}_{\rho-i\frac{\zeta \alpha_0}{\norm{\alpha_0}}}g\big>\\
&=-2i\norm{\alpha_0}\zeta \big<r_\zeta|_{X\times X\setminus \mathrm{diag}_X},(\beta^{\mathrm{HHP}'}_{\rho-i\frac{\zeta \alpha_0}{\norm{\alpha_0}}}\otimes\beta^{\mathrm{HHP}'}_{\rho-i\frac{\zeta \alpha_0}{\norm{\alpha_0}}})(f\otimes g)\big>\\
&=-2i\norm{\alpha_0}\zeta \big<r_\zeta|_{X\times X\setminus \mathrm{diag}_X},\beta_{\nu(\zeta)}'(f\otimes g)\big>\\
&=-2i\norm{\alpha_0}\zeta \big<\beta_{\nu(\zeta)}(r_\zeta|_{X\times X\setminus \mathrm{diag}_X}),f\otimes g\big>.
\end{align*}
The claim follows by density.
\end{proof}

\subsection{Local-global comparisons}

The proof of \cite[Lemma~11]{BO12} establishes the existence of a scalar-valued meromorphic function $\C\owns\zeta\mapsto a_\zeta$ satisfying the following formulas. 

\begin{eqnarray}
\beta_{\nu(\zeta)}(r_{\zeta}|_{X\times X\setminus \mathrm{diag}_X}) &=& a_\zeta s_{\zeta}|_{\partial X\times \partial X\setminus\mathrm{diag}_{\partial X}}\label{eq:betanu_rz}
\\
\beta_{\nu(\zeta)}((\tilde r^\Gamma_{\zeta}-r_{\zeta})|_{X\times X\setminus (D_\Gamma\setminus \mathrm{diag}_X)}) &=& a_\zeta (\tilde s^\Gamma_{\zeta}-s_{\zeta})|_{\Omega\times \Omega\setminus (D^\Omega_\Gamma\setminus\mathrm{diag}_\Omega)},
\label{eq:betanu_rzB}
\end{eqnarray}
where
\[
D^\Omega_\Gamma:=(\{e\}\times \Gamma)\cdot \mathrm{diag}_\Omega=(\Gamma\times\{e\})\cdot \mathrm{diag}_\Omega\subset\Omega\times \Omega.
\]
In other words, we also have
\begin{equation}\label{eq:Schwartz kernel key}
\beta_{\nu(\zeta)}(\tilde r^\Gamma_{\zeta}|_{X\times X\setminus D_\Gamma}) = a_\zeta \tilde s_{\zeta}^\Gamma|_{\Omega\times \Omega\setminus D^\Omega_\Gamma}.
\end{equation}
Actually, \cite[p.~140]{BO12} does only yield Equation~\eqref{eq:betanu_rzB}  on a neighborhood of $\Omega\times \Omega\setminus(D_\Gamma^\Omega\setminus\mathrm{diag}_\Omega)$ in the compactification of $X\times X$ (intersected with $X\times X\setminus\mathrm{diag}_X)$. But this suffices as the boundary values only depend on the values on a neighborhood of the boundary.

Lemma~\ref{lem:bdyval-r-to-s} shows that we know the function $a_\zeta$ explicitly on $(\partial X\times\partial X)\setminus \mathrm{diag}_{\partial X}$:
\bq
a_\zeta=\frac{i}{2 \norm{\alpha_0}\zeta}. \label{eq:azeta}
\eq
We thus arrive at the result
\bq
-2i \norm{\alpha_0}\zeta\, \beta_{\nu(\zeta)}(\tilde r^\Gamma_{\zeta}|_{X\times X\setminus D_\Gamma}) = \tilde s_{\zeta}^\Gamma|_{\Omega\times \Omega\setminus D^\Omega_\Gamma}.\label{eq:result111}
\eq

The following lemma will help us to remove $D_\Gamma$ and $D_\Gamma^\Omega$ from Equation~\eqref{eq:result111}.

\begin{lem}\label{lem:poles-diagonal}
Let $M$ be a smooth Riemannian manifold and $$P:\Cinft(M)\to \Cinft(M)$$ an elliptic differential operator of order $>0$ (i.e., non-constant). Suppose that the resolvent $$(P-z)^{-1}:L^2(M)\to L^2(M)$$ is defined for all $z$ in an open subset $U\subset \C$ and extends to a connected open subset $ V\supset U$ of $\C$ as a meromorphic family of operators
\[
R(z):\CT(M)\to \mathcal D'(M).
\]
Let $r(z)\in \mathcal D'(M\times M)$ be the meromorphic family of distributions given by the Schwartz kernel of $R(z)$. Then a complex number $z_0\in V$ is a pole of $r(z)$ if, and only if, it is a pole of the meromorphic family $$r(z)|_{M\times M\setminus \mathrm{diag}_M}\in \mathcal D'(M\times M\setminus \mathrm{diag}_M)$$ given by the restriction of $r(z)$ to the complement of the diagonal in $M\times M$.
\end{lem}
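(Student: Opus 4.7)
The ``if'' direction is immediate, as the restriction of a family holomorphic at $z_0$ remains holomorphic at $z_0$. For the converse I argue by contradiction: suppose $z_0\in V$ is a pole of order $k\geq 1$ of $r(z)$ while $r(z)|_{M\times M\setminus \mathrm{diag}_M}$ is holomorphic at $z_0$, and write the Laurent expansion
\[
r(z)=\sum_{j\geq -k}(z-z_0)^{j}\,r_{j},\qquad r_{-k}\neq 0,
\]
with $r_{j}\in\D'(M\times M)$. Uniqueness of the Laurent expansion of the restriction forces $r_{-k},\ldots,r_{-1}$ to be supported in the diagonal $\mathrm{diag}_{M}$.

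The next step is to promote the two resolvent identities to identities of Schwartz kernels. On the open set $U$ where $(P-z)^{-1}$ is a bounded operator on $L^{2}(M)$, the equalities $(P-z)R(z)=\mathrm{Id}$ and $R(z)(P-z)=\mathrm{Id}$ translate into the distributional identities
\[
(P_{x}-z)\,r(z)=\delta_{\mathrm{diag}_{M}},\qquad (P^{t}_{y}-z)\,r(z)=\delta_{\mathrm{diag}_{M}},
\]
where $P^{t}$ is the formal transpose of $P$. Both sides are meromorphic in $z$ on $V$, so by uniqueness of meromorphic continuation these identities persist on all of $V$. Extracting the coefficient of $(z-z_{0})^{-k}$, which is strictly negative so that $\delta_{\mathrm{diag}_{M}}$ contributes nothing, yields
\[
(P_{x}-z_{0})\,r_{-k}=0,\qquad (P^{t}_{y}-z_{0})\,r_{-k}=0.
\]

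Ellipticity now closes the argument. Since $P$ and $P^{t}$ are both elliptic of positive order, the characteristic variety of $P_{x}-z_{0}$ in $T^{\ast}(M\times M)\setminus 0$ is contained in $\{\xi=0,\,\eta\neq 0\}$ and that of $P^{t}_{y}-z_{0}$ in $\{\eta=0,\,\xi\neq 0\}$, and these intersect trivially. Hence $r_{-k}$ lies in the kernel of a jointly elliptic system on $M\times M$ and must be smooth, $r_{-k}\in \Cinft(M\times M)$. But a smooth function supported in the closed submanifold $\mathrm{diag}_{M}$, whose complement is dense, vanishes identically, contradicting $r_{-k}\neq 0$. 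The step I expect to require the most care is the meromorphic extension of the kernel identities from $U$ to all of $V$: one needs the continuity of $R(z)\colon\CT(M)\to\D'(M)$ together with a density argument to pass from the $L^{2}$-operator equations to identities in $\D'(M\times M)$, after which the ellipticity argument is standard.
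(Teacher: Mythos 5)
Your proof is correct and the overall skeleton agrees with the paper's: both arguments pass to the Laurent expansion of $r(z)$ at $z_0$, observe via uniqueness that the principal (negative-order) coefficients are supported on $\mathrm{diag}_M$, and exploit the meromorphically continued resolvent identities to show the leading coefficient must vanish, a contradiction. The two proofs diverge only in how they extract that contradiction. The paper stays at the level of operators: from $(P_{\mathcal D'}-z_0)\circ A_{-k_0}=0$ it deduces by elliptic regularity that the local operator $A_{-k_0}$ maps $\CT(M)$ into $\Cinft(M)$, invokes Peetre's theorem to conclude $A_{-k_0}$ is a differential operator, and then kills it by a principal-symbol computation using that $\sigma_P$ has positive order. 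You instead work directly at the level of Schwartz kernels and use both resolvent identities symmetrically: the two equations $(P_x-z_0)r_{-k}=0$ and $(P_y^t-z_0)r_{-k}=0$ give $\WF(r_{-k})\subset\{\xi=0\}\cap\{\eta=0\}=\emptyset$ in $T^*(M\times M)\setminus 0$ by microlocal elliptic regularity, so $r_{-k}$ is smooth and, being supported in the nowhere-dense diagonal, vanishes. Your route is shorter and avoids Peetre's theorem and the local-constancy-of-order argument; the wave-front-set step is entirely standard and naturally uses the product structure of $M\times M$. The paper's route uses only one of the two resolvent identities and stays closer to an operator-theoretic language, at the cost of having to argue that $A_{-k_0}$ is a genuine differential operator before it can be shown to vanish. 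One small caveat worth noting in your write-up: the symbol appearing in the characteristic variety of $P_x-z_0$ is the principal (degree-$m$) symbol $p(x,\xi)$, not $p(x,\xi)-z_0$, since $z_0$ contributes at order $0<m$; this is exactly why the hypothesis $\mathrm{ord}\,P>0$ is needed, and your argument uses it correctly.
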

\begin{proof}
Denote by $P_{\CT}:\CT(M)\to \CT(M)$ and $P_{\mathcal D'}:\mathcal D'(M)\to \mathcal D'(M)$ the operators obtained from $P$ by restriction and duality, respectively. Then for all $z$ in the set $U$, where $R(z)$ is  $(P-z)^{-1}$ equipped with a smaller domain and a larger codomain, we have
\begin{align}
(P_{\mathcal D'}-z\, \mathrm{Id}_{\mathcal D'(M)})\circ R(z)&=I,\label{eq:id1111}\\
R(z)\circ (P_{\CT}-z\, \mathrm{Id}_{\CT(M)})&=I,\label{eq:id2222}
\end{align}
where  $I:\CT(M)\to\mathcal D'(M)$ 
is the inclusion. Now \eqref{eq:id1111} and \eqref{eq:id2222} are identities of holomorphic operator families on $U$  which continue meromorphically to the connected open set $V\supset U$, so  \eqref{eq:id1111} and \eqref{eq:id2222} also hold as identities of meromorphic operator families on $V$. 

Let $z_0\in V$. If $z_0$ is a pole of $r(z)|_{M\times M\setminus \mathrm{diag}_M}$, then it clearly is also a pole of $r(z)$. Conversely, assume that $z_0$ is a pole of $r(z)$. This means that for all $z\neq z_0$ in a neighborhood $U_0$ of $z_0$ in $V$ we have a unique presentation of $r(z)$ as a Laurent series
\bq
r(z)=\sum_{k\geq -k_0 }(z-z_0)^k a_k\label{eq:Laurent}
\eq
with distributions $a_k\in \mathcal D'(M\times M)$ and  $k_0\in \N=\{1,2,\ldots\}$ with $a_{-k_0}\neq 0$. Now suppose, in order to provoke a contradiction, that $z_0$ is not a pole of $r(z)|_{M\times M\setminus \mathrm{diag}_M}$. This means that for all $z$ in a neighborhood $U_1$ of $z_0$ in $V$ we have a unique presentation
\bq
r(z)|_{M\times M\setminus \mathrm{diag}_M}=\sum_{k\geq 0}(z-z_0)^k b_k\label{eq:pres1}
\eq
with distributions $b_k\in \mathcal D'(M\times M\setminus \mathrm{diag}_M)$. By restricting $r(z)$ to $M\times M\setminus \mathrm{diag}_M$ for $z\in U_0\cap U_1$, the uniqueness of the presentation \eqref{eq:pres1} implies that 
\bq
a_k|_{M\times M\setminus \mathrm{diag}_M}=0\qquad  \forall\; k<0. \label{eq:suppak}
\eq
For each $k\geq -k_0$, let $A_k:=\CT(M)\to \mathcal D'(M)$ be the operator with Schwartz kernel $a_k$. Then \eqref{eq:Laurent} implies that we have a convergent series
\bq
R(z)=\sum_{k\geq -k_0 }(z-z_0)^k A_k\label{eq:RzLaurent}
\eq
for all $z\in U_0$, and \eqref{eq:suppak} implies that $A_k$ is a local operator for all $k<0$: If $f\in \CT(M)$ and $\chi\in \CT(M\setminus \supp f)$, then \eqref{eq:suppak} gives
\[
A_k(f)(\chi)=a_k(f\otimes \chi)=0
\]
since $\supp (f\otimes \chi)\subset M\times M\setminus \mathrm{diag}_M$. Thus $\supp A_k(f)\subset \supp f$.

By \eqref{eq:RzLaurent} we can extract the operator $A_{-k_0}$ from the meromorphic family $R(z)$ as
\[
A_{-k_0}=\lim_{\substack{z\to z_0\\z\neq z_0}}(z-z_0)^{k_0}R(z).
\]
On the other hand, by \eqref{eq:id1111}, we have
\begin{align*}
(P_{\mathcal D'}-z\, \mathrm{Id}_{\mathcal D'(M)})\circ(z-z_0)^{k_0} R(z)&=
(z-z_0)^{k_0}(P_{\mathcal D'}-z\, \mathrm{Id}_{\mathcal D'(M)})\circ R(z)\\
&= (z-z_0)^{k_0}I\\
&\stackrel{z\to z_0}{\longrightarrow} 0.
\end{align*}
Using the equicontinuity of the operator family $P_{\mathcal D'}-z\, \mathrm{Id}_{\mathcal D'(M)}$ at $z=z_0$ we deduce that
\bq
(P_{\mathcal D'}-z_0\, \mathrm{Id}_{\mathcal D'(M)})\circ A_{-k_0}=0,\label{eq:pzakzero}
\eq
i.e., 
\bq
\mathrm{im}(A_{-k_0})\subset \ker(P_{\mathcal D'}-z_0\, \mathrm{Id}_{\mathcal D'(M)}).\label{eq:imker}
\eq
Now $P$ is elliptic, so \eqref{eq:imker} gives $\mathrm{im}(A_{-k_0})\subset \Cinft(M)$ by elliptic regularity. We already know that $A_{-k_0}$ is a local operator, so by Peetre's Theorem we conclude that $A_{-k_0}$ is a differential operator. In particular, $\mathrm{im}(A_{-k_0})\subset \CT(M)$. Knowing this, \eqref{eq:pzakzero} implies  that
\bq
(P_{\CT}-z_0\, \mathrm{Id}_{\CT(M)})\circ A_{-k_0}=0.\label{eq:compzero}
\eq
Now, around each $x\in M$ we can choose a neighborhood $W$ of $x$ in $M$ such that the differential operator $A_{-k_0}$ has a constant order $N$ on $W$.  Let $\sigma_P,\sigma_{A_{-k_0}}\in \Cinft(T^\ast M)$ be the principal symbols of $P$ and $A_{-k_0}$, respectively. By the ellipticity of $P$ we have $\sigma_P(x,\xi)\neq 0$ for all $\xi\neq 0$.  Since $P$ has order $>0$ it follows that $\sigma_P(x,\xi)-z_0\neq 0$ for all $\xi$ with $\norm{\xi}$ sufficiently large. From the composition theorem and \eqref{eq:compzero} we get
\[
(\sigma_P(x,\xi)-z_0)\sigma_{A_{-k_0}}=0,
\]
which implies that $\sigma_{A_{-k_0}}(x,\xi)=0$ for all $\xi$ with $\norm{\xi}$ sufficiently large and consequently $\sigma_{A_{-k_0}}(x,\xi)=0$ for all $\xi$ because it is a polynomial. Thus the order $N$ component of $A_{-k_0}$ vanishes at $x$, which implies that $A_{-k_0}|_W=0$. Since $x\in M$ was arbitrary, we conclude that $A_{-k_0}=0$ and hence $a_{-k_0}=0$, a contradiction.
\end{proof}

We can now prove our main result:

\begin{thm}\label{main theorem} Outside the set $-\frac{i}{2}\norm{\alpha_0}\N_0$, the scattering poles which are not in the physical plane $\mathcal{H}^\Gamma_\mathrm{phys}$ are precisely the quantum resonances. 

Moreover, if $\zeta_0\not\in -\frac{i}{2}\norm{\alpha_0}\N_0$ is a simple pole of the quantum resolvent as well as the scattering matrix, then the boundary value map induces an isomorphism between the images of the two residue operators at $\zeta_0$. 
\end{thm}

\begin{proof}
Let $\zeta_0$ be a quantum resonance of $\Gamma\backslash X$, i.e. a pole of $\tilde{r}_\zeta^\Gamma$. Then $\zeta_0$ is \emph{not} in the physical halfplane $\mathcal{H}^\Gamma_\mathrm{phys}$ and Lemma~\ref{lem:poles-diagonal} implies that $\zeta_0$ is also a pole of $\tilde{r}_\zeta^\Gamma\vert_{X\times X\backslash D_\Gamma}$. Now suppose that $\zeta_0\not\in -\frac{i}{2}\norm{\alpha_0}\N_0$.

\begin{description}
\item[Case 1  $c\big( i\zeta_0\frac{\alpha_0}{\norm{\alpha_0}}\big)\not=0$] In this case Equation~\eqref{eq:boundarymaps} and the result \cite[Thm.~6.4]{HHP19} imply that $\zeta\mapsto\beta_{\nu(\zeta)}$ is a holomorphic family of injective operators in a neighborhood of $\zeta_0$. Thus by Equation~\eqref{eq:result111}  $\zeta_0$ is a pole of 
$\tilde s_{\zeta}^\Gamma|_{\Omega\times \Omega\setminus D^\Omega_\Gamma}$. 
Hence  $\zeta_0$ is also a pole of $\tilde s_{\zeta}^\Gamma$, i.e. a scattering pole of $\Gamma\backslash X$. 

\item[Case 2  $c\big( i\zeta_0\frac{\alpha_0}{\norm{\alpha_0}}\big)=0$] 

In view of \cite[Re.~5.1]{HHP19} $c\big( i\zeta_0\frac{\alpha_0}{\norm{\alpha_0}}\big)=0$  implies $\Im\zeta_0>\|\rho\|$. If $\delta_\Gamma<2\|\rho\|$, then $i\rho\in \mathcal{H}^\Gamma_\mathrm{phys}$, hence also $\zeta_0\in \mathcal{H}^\Gamma_\mathrm{phys}$. Thus,  this case cannot occur, i.e. $\delta_\Gamma=2\|\rho\|$. But then $0$ is in the $L^2$-spectrum of $\Delta_{\Gamma\backslash X}$. Since the continuous $L^2$-spectrum of $\Delta_{\Gamma\backslash X}$ is $[\|\rho\|^2,\infty[$ the point $0$ has to be an eigenvalue. But  $\Delta_{\Gamma\backslash X}f=0$ for $f\in L^2(\Gamma\backslash X)$ is equivalent to $df=0$ and $d^*f=0$, so $f$ has to be constant. But this implies that $\Gamma\backslash X$ has finite volume, which contradicts our hypothesis that $\Gamma$ is convex-cocompact, but not cocompact. 
\end{description}

Conversely, assume that $\zeta_0$ is a scattering pole which is not in the physical halfplane $\mathcal{H}^\Gamma_\mathrm{phys}$ and satisfies  $\zeta_0\not\in -\frac{i}{2}\norm{\alpha_0}\N_0$. Then, by the argument given in Case 2, $c\big( i\zeta_0\frac{\alpha_0}{\norm{\alpha_0}}\big)\not=0$ and again Equation~\eqref{eq:boundarymaps} and \cite[Thm.~6.4]{HHP19} imply that $\zeta\mapsto\beta_{\nu(\zeta)}$ is a holomorphic family of injective operators in a neighborhood of $\zeta_0$. 

For any  $\lambda\in \aL_\C^\ast$ and $f\in  C^{-\infty}(\Gamma\backslash \Omega, V_{\Gamma\backslash \Omega}(\sigma_\lambda))$ \cite[p.~134]{BO12} yields an Eisenstein series, which we denote by $\mathcal P_\lambda^\Gamma f$ since it takes the role of the Poisson transform in the case of locally symmetric spaces. Then $\mathcal P_\lambda^\Gamma f\in  C^{\infty}(\Gamma\backslash X)$ is a $\Delta_{\Gamma\backslash X}$-eigenfunction, which has $f$ as boundary value. Applying this to the product $\Gamma\backslash \Omega\times \Gamma\backslash \Omega$ and scattering kernel $\tilde s_\zeta^\Gamma$ with the appropriate $\lambda$ determined by $\zeta$ we find 
a kernel function $r^{\Gamma,\mathrm{Eis}}_\zeta$ on $\Gamma\backslash X\times \Gamma\backslash X$ which has the same boundary values as $r^{\Gamma}_\zeta$.  
But then, since $\zeta\mapsto\beta_{\nu(\zeta)}$ in a neighborhood of $\zeta_0$ is a holomorphic family of injective maps, we have $r^{\Gamma,\mathrm{Eis}}_\zeta=r^{\Gamma}_\zeta$ and $\zeta_0$ is a pole of $r^{\Gamma}_\zeta$, i.e. it is also a quantum resonance. 

To prove the claim about the resonance multiplicities, let $\zeta_0\not\in -\frac{i}{2}\norm{\alpha_0}\N_0$ be a quantum resonance. Then we are in Case 1 above, so that $\zeta\mapsto\beta_{\nu(\zeta)}$ is a holomorphic family of injective operators in a neighborhood of $\zeta_0$. If $\zeta_0$ is a simple pole of  $\tilde r^\Gamma_{\zeta}|_{X\times X\setminus D_\Gamma}$, then the residue of  
$\beta_{\nu(\zeta)}(\tilde r^\Gamma_{\zeta}|_{X\times X\setminus D_\Gamma})$ at $\zeta_0$ is given by 
\[
\mathrm{Res}_{\zeta=\zeta_0}\big(\beta_{\nu(\zeta)}(\tilde r^\Gamma_{\zeta}|_{X\times X\setminus D_\Gamma})\big)
=
\beta_{\nu(\zeta_0)}\mathrm{Res}_{\zeta=\zeta_0}(\tilde r^\Gamma_{\zeta}|_{X\times X\setminus D_\Gamma}).
\]
On the other hand, we have 
\[
\mathrm{Res}_{\zeta=\zeta_0}(
a_\zeta \tilde s_{\zeta}^\Gamma|_{\Omega\times \Omega\setminus D^\Omega_\Gamma})
=  \frac{i}{2 \norm{\alpha_0}} \mathrm{Res}_{\zeta=\zeta_0}\big(
\frac{1}{\zeta} \tilde s_{\zeta}^\Gamma|_{\Omega\times \Omega\setminus D^\Omega_\Gamma}\big).\]
If $\zeta_0$ is also a simple pole of $ \tilde s_{\zeta}^\Gamma|_{\Omega\times \Omega\setminus D^\Omega_\Gamma}$, then
\[\frac{i}{2 \norm{\alpha_0}} \mathrm{Res}_{\zeta=\zeta_0}\big(
\frac{1}{\zeta} \tilde s_{\zeta}^\Gamma|_{\Omega\times \Omega\setminus D^\Omega_\Gamma}\big)
=
\frac{i}{2 \norm{\alpha_0}\zeta_0}\mathrm{Res}_{\zeta=\zeta_0}(
\tilde s_{\zeta}^\Gamma|_{\Omega\times \Omega\setminus D^\Omega_\Gamma}).
\]
Together, this gives 
\begin{equation}\label{eq:residues}
\beta_{\nu(\zeta_0)}\mathrm{Res}_{\zeta=\zeta_0}(\tilde r^\Gamma_{\zeta}|_{X\times X\setminus D_\Gamma})
=
\frac{i}{2 \norm{\alpha_0}\zeta_0}\mathrm{Res}_{\zeta=\zeta_0}(
\tilde s_{\zeta}^\Gamma|_{\Omega\times \Omega\setminus D^\Omega_\Gamma}).
\end{equation} 
We want to use this relation between kernel residues to get a relation between the images of the operator residues. To this end, we first observe that for all $\varphi,\psi\in \CT(X)$
\begin{align*}
\langle\mathrm{Res}_{\zeta=\zeta_0}(\tilde r^\Gamma_{\zeta}),\varphi\otimes\psi\rangle
&= \mathrm{Res}_{\zeta=\zeta_0}(\langle\tilde r^\Gamma_{\zeta},\varphi\otimes\psi\rangle)\\
&= \mathrm{Res}_{\zeta=\zeta_0}(\langle\tilde R^\Gamma_{\zeta}\varphi,\psi\rangle)\\
&= \langle\mathrm{Res}_{\zeta=\zeta_0}(\tilde R^\Gamma_{\zeta})\varphi,\psi\rangle.
\end{align*}
Thus, $\mathrm{Res}_{\zeta=\zeta_0}(\tilde r^\Gamma_{\zeta})$ is the Schwartz kernel of $\mathrm{Res}_{\zeta=\zeta_0}(\tilde R^\Gamma_{\zeta})$. Similarly, $\mathrm{Res}_{\zeta=\zeta_0}(\tilde s^\Gamma_{\zeta})$ is the Schwartz kernel of $\mathrm{Res}_{\zeta=\zeta_0}(\tilde{\mathbf{S}}^\Gamma_{\zeta})$,  where $\tilde {\mathbf{S}}^\Gamma_{\zeta}: C^\infty_c(\partial X)\to \mathcal D'(\partial X)$ is the linear operator with Schwartz kernel $\tilde s^\Gamma_{\zeta}$. 
To continue, let us use the simplified notation $\tilde{\mathsf R}:=\mathrm{Res}_{\zeta=\zeta_0}(\tilde R^\Gamma_{\zeta})$, $\tilde{\mathsf S}:=\mathrm{Res}_{\zeta=\zeta_0}(\tilde{\mathbf{S}}^\Gamma_{\zeta})$, $\tilde{\mathsf{r}}:=\mathrm{Res}_{\zeta=\zeta_0}(\tilde r^\Gamma_{\zeta})$, $\tilde {\mathsf{s}}:=\mathrm{Res}_{\zeta=\zeta_0}(\tilde s^\Gamma_{\zeta})$ as well as the somewhat abstract notation $\mathrm{dom}(\mathsf{X})$ for the domain of an operator $\mathsf{X}$. Then we have
\bq
\mathrm{im}(\tilde{\mathsf R})=\mathrm{span}\{\tilde{\mathsf r}(f\otimes \cdot)\,|\, f\in \mathrm{dom}(\tilde{\mathsf R})\},\label{eq:5210529510}
\eq
while
\begin{align*}
\mathrm{im}(\tilde{\mathsf S})&=\mathrm{span}\{\tilde{\mathsf s}(\omega\otimes \cdot)\,|\, \omega\in \mathrm{dom}(\tilde{\mathsf S})\}\\
&\stackrel{\hspace*{-1em}\text{\eqref{eq:residues}}\hspace*{-1em}}{=}\mathrm{span}\{\beta_{\nu(\zeta_0)}(\tilde{\mathsf r})(\omega\otimes \cdot)\,|\, \omega\in \mathrm{dom}(\tilde{\mathsf S})\}\\
&\stackrel{\hspace*{-1em}\text{\eqref{eq:boundarymaps}}\hspace*{-1em}}{=}\mathrm{span}\{\beta^{\mathrm{HHP}}_{\rho-i\frac{\zeta_0 \alpha_0}{\norm{\alpha_0}}}\big(\tilde{\mathsf r}(\beta^{\mathrm{HHP}'}_{\rho-i\frac{\zeta_0 \alpha_0}{\norm{\alpha_0}}}(\omega)\otimes \cdot)\big)\,|\, \omega\in \mathrm{dom}(\tilde{\mathsf S})\}\\
&=\mathrm{span}\{\beta^{\mathrm{HHP}}_{\rho-i\frac{\zeta_0 \alpha_0}{\norm{\alpha_0}}}\big(\tilde{\mathsf r}(f\otimes \cdot)\big)\,|\, f\in \mathrm{dom}(\tilde{\mathsf R})\}\\
&\stackrel{\hspace*{-1em}\text{\eqref{eq:5210529510}}\hspace*{-1em}}{=}\beta^{\mathrm{HHP}}_{\rho-i\frac{\zeta_0 \alpha_0}{\norm{\alpha_0}}}\big(\mathrm{im}(\tilde{\mathsf R})\big),
\end{align*}
where in the second last step we used that the adjoint $\beta^{\mathrm{HHP'}}_{\rho-i\frac{\zeta_0 \alpha_0}{\norm{\alpha_0}}}:\mathrm{dom}(\tilde{\mathsf S})\to\mathrm{dom}(\tilde{\mathsf R})$ is surjective (being the adjoint of an injective map). Now, analogously as at the end of the proof of \cite[Thm.~7.1]{HHP19}, we observe that as a map $\mathrm{im}(\tilde{\mathsf R})\to\mathrm{im}(\tilde{\mathsf S})$ the boundary value map is inverted by the Poisson transform not only from the right but also from the left, so that it is bijective. Thus, passing to $\Gamma$-quotients using the $\Gamma$-equivariance of the boundary value map, we arrive at a linear isomorphism
\[
\mathrm{im}(\mathrm{Res}_{\zeta=\zeta_0}(\mathbf{S}_{\zeta}^\Gamma))\cong\mathrm{im}(\mathrm{Res}_{\zeta=\zeta_0}(R_{\zeta}^\Gamma)).
\]
This finishes the proof.
\end{proof}

\providecommand{\bysame}{\leavevmode\hbox to3em{\hrulefill}\thinspace}
\providecommand{\MR}{\relax\ifhmode\unskip\space\fi MR }
\providecommand{\MRhref}[2]{%
  \href{http://www.ams.org/mathscinet-getitem?mr=#1}{#2}
}
\providecommand{\href}[2]{#2}

\bigskip

\end{document}